\providecommand{\U}[1]{\protect\rule{.1in}{.1in}}
\theoremstyle{plain}
\newtheorem{theorem}{Theorem}[section]
\newtheorem{lemma}{Lemma}[section]
\newtheorem{proposition}{Proposition}[section]
\newtheorem{corollary}{Corollary}[section]
\theoremstyle{definition}
\newtheorem{definition}{Definition}[section]
\theoremstyle{remark}
\newtheorem{remark}{Remark}[section]
\newtheorem*{ack}{Acknowledgements}
\numberwithin{equation}{section}
\begin{document}
\title{Generalizations of Rodrigues Type Formulas for Hypergeometric Difference
Equations on Nonuniform Lattices}
\author[J. Cheng ]{Jinfa Cheng}
\address{Jinfa Cheng \\
School of Mathematical Science, Xiamen University, Xiamen, Fujian 361005, China}
\email{jfcheng@xmu.edu.cn}
\author[L. Jia]{Lukun Jia}
\address{Lukun Jia \\
School of Mathematical Science, Xiamen University, Xiamen, Fujian 361005, China}
\email{jialunkun2005@163.com}

\begin{abstract}
By building a second order adjoint difference equations on nonuniform
lattices, the generalized Rodrigues type representation for the second kind
solution of a second order difference equation of hypergeometric type on
nonuniform lattices is given. The general solution of the equation in the form
of a combination of a standard Rodrigues formula and a "generalized" Rodrigues
formula is also established.

\end{abstract}
\date{Nov 4 2018}
\maketitle

\textbf{Keywords:} Special function; Orthogonal polynomials; Adjoint
difference equation; Difference equation of hypergeometric type; Non-uniform lattice

\textbf{MSC 2010:} 33D20,33D45, 33C45.

\section{Introduction}

The special functions of mathematical physics, namely, the classical
orthogonal polynomials and the hypergeometric and cylindrical functions, are
solutions of a differential equation of hypergeometric type.

Let $\sigma(x)$ and $\tau(x)$ be polynomials of at most second and first
degree, respectively, and $\lambda$ be a constant. The following second order
differential equation
\begin{equation}
\sigma(x)y^{\prime\prime}(x)+\tau(x)y^{\prime}(x)+\lambda y(x)=0,
\label{hyper00}%
\end{equation}
is called a \emph{differential equation of hypergeometric type}. If for some
positive integer $n$,
\[
\lambda=\lambda_{n}:=-\frac{n(n-1)\sigma^{\prime\prime}}{2}-n\tau^{\prime
}~\text{and}~\lambda_{m}\neq\lambda_{n}~\text{for}~m=0,1,\dots,n-1,
\]
the equation (\ref{hyper00}) has a polynomial solution $y_{n}(x)$ of degree
$n$ expressed by the Rodrigues formula
\cite{ince1944,hille1976,vicente1943,erdelyi1953,horner1963,horner1964,koekoek2010,nikiforov1983,nikiforov1988,nikiforov1991,wang1989}%
:
\[
y_{n}(x)=\frac{1}{\rho(x)}\frac{d^{n}}{dx^{n}}(\rho(x)\sigma^{n}(x)),
\]
where $\rho(x)$ satisfies the Pearson equation
\[
(\sigma(x)\rho(x))^{\prime}=\tau(x)\rho(x).
\]

These solution functions are useful in quantum mechanics, the theory of group
representations, and computational mathematics. Because of this, the classical
theory of hypergeometric type equations has been greatly developed by G.
Andrews, R. Askey \cite{andrews1985, andrews1999}, J. A. Wilson, M. Ismail
\cite{askey1979, askey1984, askey1985,ismail1989}; F. Nikiforov, K. Suslov, B.
Uvarov, N. M. Atakishiyev \cite{nikiforov1983, nikiforov1988,
nikiforov1991,suslov1989, suslov1992,atakishyev1995}; G. George, M. Rahman
\cite{gasper2004}; T. H. Koornwinder \cite{koornwinder1994}; and many other
researchers like R. \'{A}lvarez-Nodarse, K. L. Cardoso, I. Area, E. Godoy, A.
Ronveaux, A. Zarzo, W. Robin, T. Dreyfus, V. Kac, P. Cheung, and L. K. Jia, J.
F. Cheng, Z.S, Feng [23-32].

Knowing one polynomial solution of (\ref{hyper00}) it is possible to build a
linearly independent solution in many ways: Such as the technique of variation
of constants \cite{ince1944}, by an integral representation using the Cauchy
integral \cite{erdelyi1953,nikiforov1988}. However, Area et al. in
\cite{area2003} first gave an extended Rodrigues type representation of the
second solution of (\ref{hyper00}), they gave an extension of Rodrigues
formula as
\[
y_{n}(x)=\frac{C_{1}}{\rho(x)}\frac{d^{n}}{dx^{n}}\left(  \rho(x)\sigma
^{n}(x)\right)  +\frac{C_{2}}{\rho(x)}\frac{d^{n}}{dx^{n}}\left(
\rho(x)\sigma^{n}(x)\int\frac{dx}{\rho(x)\sigma^{n+1}(x)}\right)  ,
\]
where $C_{1},C_{2}$ are arbitrary constants.

Recently, inspired by \cite{vicente1943}, W. Robin gave a more general
Rodrigues formula \cite{robin2013}
\[
y_{n}(x)=\frac{1}{\rho(x)}\frac{d^{n}}{dx^{n}}\left[  \rho(x)\sigma
^{n}(x)\left(  \int\frac{P_{n}(x)+D_{n}}{\rho(x)\sigma^{n+1}(x)}%
dx+C_{n}\right)  \right]  ,
\]
where $P_{n}(x)$ is an arbitrary polynomial of degree $n$ and $C_{n},D_{n}$ is
an arbitrary constant.

In 1983, the classical theory of hypergeometric type equations had been
greatly developed by Nikiforov, Suslov and Uvarov
\cite{nikiforov1983,nikiforov1988,nikiforov1991} who started from the
following generalization. They replaced (\ref{hyper00}) by a difference
equation on a lattice with variable step size $\nabla x(s)=x(s)-x(s-1)$ :
\begin{equation}
\widetilde{\sigma}[x(s)]\frac{\Delta}{\Delta x(s-1/2)}\left[  \frac{\nabla
y(s)}{\nabla x(s)}\right]  +\frac{1}{2}\widetilde{\tau}[x(s)]\left[
\frac{\Delta y(s)}{\Delta x(s)}+\frac{\nabla y(s)}{\nabla x(s)}\right]
+\lambda y(s)=0. \label{lattice-hyper}%
\end{equation}
Here $\widetilde{\sigma}(x)$ and $\widetilde{\tau}(x)$ are polynomials of at
most the second and first degree in $x(s)$ respectively, $\lambda$ is a
constant,
\[
\Delta y(s)=y(s+1)-y(s),\quad\nabla y(s)=y(s)-y(s-1),
\]
$x(s)$ satisfies
\begin{equation}
\frac{x(s+1)+x(s)}{2}=\alpha x(s+\frac{1}{2})+\beta\label{condition1}%
\end{equation}
(where $\alpha,\beta$ are constants) and
\begin{equation}
\text{ $x^{2}(s+1)+x^{2}(s)$ is a polynomial of degree at most two with
respect to $x(s+\frac{1}{2})$.} \label{condition2}%
\end{equation}

The difference equation (\ref{lattice-hyper}) obtained as a result of
approximating the differential equation (1.1) on a non-uniform lattice is of
independent importance and arises in a number of other questions. Its
solutions essentially generalized the solutions of the original differential
equation and are of interest in their own right. Some of its solutions have
for some time been used in quantum mechanics, the theory of group
representations, and computational mathematics. For more about the difference
equation of hypergeometric type (\ref{lattice-hyper}) on nonuniform lattices,
the reader can see R. Koekoek, P. E. Lesky, R. F. Swarttouw \cite{koekoek2010}
, F. Nikiforov, K. Suslov, B. Uvarov, N. M. Atakishiyev, M. Rahman
\cite{nikiforov1983,nikiforov1988,nikiforov1991,suslov1989,suslov1992,atakishyev1995}%
, A.P. Magnus \cite{magnus1995}, M. Foupouagnigni \cite{foupouagnigni2008,
foupouagnigni2013}, N.S. Witte \cite{witte2011}.

\begin{definition}
Two kinds of lattice functions $x(s)$ are called \emph{nonuniform lattices}
which satisfy the conditions (\ref{condition1}) and (\ref{condition2}):
\begin{align}
\label{lattice1}x(s)=c_{1}q^{s}+c_{2}q^{-s}+c_{3},
\end{align}
\begin{align}
\label{lattice2}x(s)=\widetilde{c}_{1}s^{2}+\widetilde{c}_{2}s+\widetilde{c}%
_{3},
\end{align}
where $c_{i},\widetilde{c}_{i}$ are arbitrary constants and $c_{1}c_{2}\neq0$,
$\widetilde{c}_{1}\widetilde{c}_{2}\neq0$.
\end{definition}

For certain values of $\lambda$ the equation (\ref{lattice-hyper}) has a
polynomial solutions via the difference analog of Rodrigues formula.
Naturally, one can ask whether there is an analog of extension of Rodrigues
formula in the difference equation. As far as we know, the extension to the
case of uniform lattices like $x(s)=s$ and $x(s)=q^{s}$ had already been
obtained in \cite{area2005}. However, in the case of nonuniform lattices
(\ref{lattice1}) or (\ref{lattice2}), it seems more complicated and difficult,
related result in these cases have not been appeared since paper
\cite{area2005} was published in 2005 .

In this paper, we obtain two extensions of Rodrigues formula in the case of
nonuniform lattices (\ref{lattice1}) and (\ref{lattice2}). The paper is
organized as follows. In section 2, calculus on general lattices has been
introduced. In section 3, we first give some Lemmas, then build a second order
adjoint equation in a new way, and an extensions of Rodrigues formula are
given in Theorem \ref{extension1}. In section 4, another more general
Rodrigues formula Theorem \ref{extension2} is estabished in different method
from section 3.

\section{Calculus on general lattices}

Let $x(s)$ be a lattice, where $s\in\mathbb{C}$. For any integer $k$,
$x_{k}(s)=x(s+\frac{k}{2})$ is also a lattice. Given a function $f(s)$, define
two difference operators with respective to $x_{k}(s)$ as
\begin{align}
\label{difference}%
\begin{split}
&  \Delta_{k} f(s)=\frac{\Delta f(s)}{\Delta x_{k}(s)}\\
&  \nabla_{k}f(s)=\frac{\nabla f(s)}{\nabla x_{k}(s)}.
\end{split}
\end{align}
Moreover, for any nonnegative integer $n$, let
\begin{align*}
&  \Delta_{k}^{(n)}f(s)=\left\{
\begin{aligned} &f(s), && \text{ if $n=0$,} \\ &\frac{\Delta}{\Delta x_{k+n-1}(s)}\cdots\frac{\Delta}{\Delta x_{k+1}(s)}\frac{\Delta}{\Delta x_{k}(s)}f(s), && ~\text{if $n\geq1$}. \end{aligned} \right.
\\
&  \nabla_{k}^{(n)}f(s) =\left\{
\begin{aligned} &f(s), && \text{ if $n=0$,} \\ &\frac{\nabla}{\nabla x_{k-n+1}}\cdots\frac{\nabla}{\nabla x_{k-1}(s)}\frac{\nabla}{\nabla x_k(s)}f(s), && ~\text{if $n\geq1$}. \end{aligned} \right.
\end{align*}

The following properties are easy to verify.

\begin{proposition}
\label{product} Given two function $f(s),g(s)$ with complex variable $s$, we
have
\begin{align*}%
\begin{split}
&  \Delta_{k}(f(s)g(s))=f(s+1)\Delta_{k} g(s)+g(s)\Delta_{k} f(s)\\
&  \quad\quad\quad\quad\quad=g(s+1)\Delta_{k} f(s)+f(s)\Delta_{k} g(s),
\end{split}
\\%
\begin{split}
&  \Delta_{k}\left(  \frac{f(s)}{g(s)}\right)  =\frac{g(s+1)\Delta_{k}
f(s)-f(s+1)\Delta_{k} g(s)}{g(s)g(s+1)}\\
&  \quad\quad\quad\quad\quad=\frac{g(s)\Delta_{k} f(s)-f(s)\Delta_{k}
g(s)}{g(s)g(s+1)},
\end{split}
\\%
\begin{split}
&  \nabla_{k}(f(s)g(s))=f(s-1)\nabla_{k} g(s)+g(s)\nabla_{k} f(s)\\
&  \quad\quad\quad\quad\quad=g(s-1)\nabla_{k} f(s)+f(s)\nabla_{k} g(s),
\end{split}
\\%
\begin{split}
&  \nabla_{k}\left(  \frac{f(s)}{g(s)}\right)  =\frac{g(s-1)\nabla_{k}
f(s)-f(s-1)\nabla_{k} g(s)}{g(s)g(s-1)}\\
&  \quad\quad\quad\quad\quad=\frac{g(s)\nabla_{k} f(s)-f(s)\nabla_{k}
g(s)}{g(s)g(s-1)}.
\end{split}
\end{align*}

\end{proposition}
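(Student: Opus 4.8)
The plan is to establish each of the four identities by reducing it to the definition of the difference operators $\Delta_k$ and $\nabla_k$ given in \eqref{difference}, together with the elementary product and quotient rules for the forward and backward difference operators $\Delta$ and $\nabla$. Since $\Delta_k f(s) = \Delta f(s)/\Delta x_k(s)$ and the denominator $\Delta x_k(s)$ is common to every term in each proposed identity, the key observation is that verifying the formula for $\Delta_k$ is equivalent to verifying the corresponding formula for the bare operator $\Delta$, after clearing the common factor $\Delta x_k(s)$. Thus the lattice-dependence plays no essential role, and the entire proposition is driven by the classical discrete Leibniz rules.

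First I would prove the product rule for $\Delta$, namely $\Delta(fg)(s) = f(s+1)\Delta g(s) + g(s)\Delta f(s)$, by writing out $\Delta(fg)(s) = f(s+1)g(s+1) - f(s)g(s)$ and inserting the telescoping intermediate term $\pm f(s+1)g(s)$; grouping the four resulting terms in two ways yields both displayed expressions for $\Delta_k(fg)$ after dividing by $\Delta x_k(s)$. The symmetry between the two expressions comes simply from choosing to add and subtract $f(s)g(s+1)$ instead. Next I would treat the quotient rule by computing $\Delta(f/g)(s) = f(s+1)/g(s+1) - f(s)/g(s)$, placing everything over the common denominator $g(s)g(s+1)$, and inserting the intermediate term $\pm f(s)g(s+1)$ (respectively $\pm f(s+1)g(s)$) in the numerator to recover the two stated forms; again division by $\Delta x_k(s)$ converts the statement about $\Delta$ into the statement about $\Delta_k$.

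The backward-difference identities for $\nabla_k$ are handled in exactly the same fashion, with $\nabla f(s) = f(s) - f(s-1)$ in place of $\Delta$ and the shift by $-1$ replacing the shift by $+1$; the common denominator $\nabla x_k(s)$ plays the role of $\Delta x_k(s)$. Here the intermediate telescoping terms are $\pm f(s-1)g(s)$ and $\pm f(s)g(s-1)$, producing the two listed forms of the product rule, and $g(s)g(s-1)$ is the common denominator in the quotient rule.

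There is no genuine obstacle in this proposition; it is purely a matter of bookkeeping. The only point requiring a little care is making sure the shifted arguments land correctly: because $\Delta$ shifts forward, the product rule naturally produces $f(s+1)$, whereas $\nabla$ shifts backward and produces $f(s-1)$, so one must not blindly copy the forward case. Keeping track of which factor is evaluated at the shifted argument, and consistently clearing the common lattice denominator $\Delta x_k(s)$ or $\nabla x_k(s)$ at the end, is the entire content of the verification.
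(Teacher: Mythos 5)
Your proposal is correct and is exactly the direct verification the paper intends: the paper offers no proof at all, merely asserting the properties are "easy to verify," and your reduction to the classical telescoping identities for $\Delta$ and $\nabla$ followed by division by the common lattice factor $\Delta x_k(s)$ or $\nabla x_k(s)$ is the standard argument. One tiny slip worth fixing when writing it out: in the quotient rule for $\Delta$, the numerator over the common denominator is $f(s+1)g(s)-f(s)g(s+1)$, so the cross terms to add and subtract are $f(s)g(s)$ (giving the form $g(s)\Delta_k f(s)-f(s)\Delta_k g(s)$) and $f(s+1)g(s+1)$ (giving the form $g(s+1)\Delta_k f(s)-f(s+1)\Delta_k g(s)$), not the terms $f(s)g(s+1)$, $f(s+1)g(s)$ you named, which already appear in the numerator.
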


To deal with the inverse of the difference operation $\nabla_{k}$, that is the
integration, following the idea of Gaspard Bangerezako in
\cite{bangerezako2005}, let $\nabla_{k}f(t)=g(t).$ Then
\[
f(t)-f(t-1)=g(t)\left[  x_{k}(t)-x_{k}(t-1)\right]
\]
Choose $N,s\in\mathbb{C}$. Summing from $t=N$ to $t=s$, we have
\[
f(s)-f(N-1)=\sum_{t=N}^{t=s}g(t)\nabla x_{k}(t).
\]
Thus, we define
\begin{align}
\int_{N}^{s}g(t)d_{\nabla}x_{k}(t)=\sum_{t=N}^{t=s}g(t)\nabla x_{k}(t).
\end{align}

It is easy to verify that

\begin{proposition}
\label{integration}\mbox{} Given two function f(s),g(s) with complex variable
N, s, we have \begin{flalign*}
&(1)\nabla_k\left[\int_{N}^{s}g(t)d_\nabla x_k(t)\right]=g(s),&&\\
&(2)\int_{N}^{s}\nabla_kf(t)d_\nabla x_k(t)=f(s)-f(N).&&\\
\end{flalign*}

\end{proposition}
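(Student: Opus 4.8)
The plan is to prove Proposition \ref{integration} directly from the definition of the summation operator, treating each of the two identities as a telescoping computation. Both parts follow once I establish the key relation that the operator $\int_{N}^{s}(\cdot)\,d_{\nabla}x_{k}(t)$ is genuinely the inverse of $\nabla_{k}$ on the appropriate side, and the mechanism in both cases is that the sum $\sum_{t=N}^{s}(\cdot)\nabla x_{k}(t)$ collapses when the summand is itself a backward difference.

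For part (1), I would start from the definition
\[
\int_{N}^{s}g(t)\,d_{\nabla}x_{k}(t)=\sum_{t=N}^{t=s}g(t)\nabla x_{k}(t),
\]
and apply $\nabla_{k}$ to it in the variable $s$. By the definition of $\nabla_{k}$ in \eqref{difference}, this is $\nabla F(s)/\nabla x_{k}(s)$ where $F(s)$ denotes the sum. First I would compute $\nabla F(s)=F(s)-F(s-1)$, observing that passing from the upper limit $s-1$ to $s$ adds exactly the single term $g(s)\nabla x_{k}(s)$ to the sum, so that $\nabla F(s)=g(s)\nabla x_{k}(s)$. Dividing by $\nabla x_{k}(s)$ then yields $g(s)$, which is the claim. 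This step is essentially the discrete fundamental theorem of calculus and requires only care that the summation index runs correctly so that the difference of consecutive upper limits isolates the top term.

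For part (2), I would substitute $g(t)=\nabla_{k}f(t)=\nabla f(t)/\nabla x_{k}(t)$ into the definition, so that
\[
\int_{N}^{s}\nabla_{k}f(t)\,d_{\nabla}x_{k}(t)=\sum_{t=N}^{t=s}\frac{\nabla f(t)}{\nabla x_{k}(t)}\,\nabla x_{k}(t)=\sum_{t=N}^{t=s}\nabla f(t).
\]
The factors $\nabla x_{k}(t)$ cancel exactly (this is where I must assume $\nabla x_{k}(t)\neq0$ along the summation path, which is implicit in the lattice being nondegenerate), leaving a telescoping sum $\sum_{t=N}^{s}\bigl(f(t)-f(t-1)\bigr)=f(s)-f(N-1)$. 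At this point I note a subtlety: the naive telescoping gives $f(s)-f(N-1)$, whereas the stated conclusion is $f(s)-f(N)$; this indicates that the intended convention for the lower limit is such that the sum runs from $t=N+1$ (or equivalently the boundary term is $f(N)$), matching the derivation in the paragraph preceding the proposition where $f(s)-f(N-1)$ appears. I would therefore align the summation convention with the definition so that the lower boundary contributes $f(N)$, and then the telescoping collapse gives the stated result.

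The main obstacle is not analytic difficulty but bookkeeping: ensuring that the summation convention for $\int_{N}^{s}$ is used consistently across both parts so that the boundary terms match the stated formulas, and confirming that $\nabla x_{k}(t)$ never vanishes so the cancellation in part (2) is legitimate. Once the index conventions are pinned down, both identities are immediate telescoping arguments, so I would present them compactly, deriving $\nabla_{k}F(s)=g(s)$ for the first and the telescoping collapse for the second, and simply remark that the proposition expresses that $\int_{N}^{s}d_{\nabla}x_{k}$ and $\nabla_{k}$ are mutually inverse up to the boundary value.
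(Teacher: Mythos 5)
Your proof is correct and is exactly the direct telescoping verification the paper intends (the paper itself omits the proof, introducing the proposition with ``It is easy to verify that''). Your flag on the boundary term is also well taken: with the stated convention $\int_{N}^{s}g(t)\,d_{\nabla}x_{k}(t)=\sum_{t=N}^{t=s}g(t)\nabla x_{k}(t)$, part (2) literally telescopes to $f(s)-f(N-1)$ --- as the paper's own derivation immediately preceding the definition shows --- so the stated conclusion $f(s)-f(N)$ holds only under the shifted index convention you identify (effectively summing from $t=N+1$, or replacing $f(N)$ by $f(N-1)$), a minor inconsistency present in the paper itself.
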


\section{Rodrigues formula}

With notations in section 2, the difference equation of hypergeometric type
(\ref{lattice-hyper}) can be written as
\begin{equation}
\label{operator}\widetilde{\sigma}[x(s)]\Delta_{-1}\nabla_{0}y(s)+\frac
{\widetilde{\tau}[x(s)]}{2}\left[  \Delta_{0}y(s)+\nabla_{0}y(s)\right]
+\lambda y(s)=0.
\end{equation}
In the following context, we assume that the lattice $x(s)$ has the form
(\ref{lattice1}) or (\ref{lattice2}).

Let
\[
z_{k}(s)=\Delta_{0}^{(k)}y(s)=\Delta_{k-1}\Delta_{k-2}\cdots\Delta_{0}y(s).
\]
Then for any nonnegative integer $k$, $z_{k}(s)$ satisfies an equation which
is the same type as (\ref{operator}) \cite{nikiforov1991}:
\begin{equation}
\widetilde{\sigma}_{k}[x_{k}(s)]\Delta_{k-1}\nabla_{k}z_{k}(s)+\frac
{\widetilde{\tau}_{k}[x_{k}(s)]}{2}\left[  \Delta_{k}z_{k}(s)+\nabla_{k}%
z_{k}(s)\right]  +\mu_{k}z_{k}(s)=0, \label{general equation}%
\end{equation}
where $\widetilde{\sigma}_{k}(x_{k})$ and $\widetilde{\tau}_{k}(x_{k})$ are
polynomials of at most second and first degrees in $x_{k}$, respectively,
$\mu_{k}$ is a constant, and%

\begin{align*}
\widetilde{\sigma}_{k}[x_{k}(s)]  &  =\frac{\widetilde{\sigma}_{k-1}%
[x_{k-1}(s+1)]+\widetilde{\sigma}_{k-1}[x_{k-1}(s)]}{2}\\
&  +\frac{1}{4}\Delta_{k-1}\widetilde{\tau}_{k-1}(s)\frac{\Delta
x_{k}(s)+\nabla x_{k}(s)}{2\Delta x_{k-1}(s)}[\Delta x_{k-1}(s)]^{2}\\
&  +\frac{\widetilde{\tau}_{k-1}[x_{k-1}(s+1)]+\widetilde{\tau}_{k-1}%
[x_{k-1}(s)]}{2}\frac{\Delta x_{k}(s)-\nabla x_{k}(s)}{4},\\
\widetilde{\sigma}_{0}[x_{0}(s)]  &  =\widetilde{\sigma}[x(s)];
\end{align*}

\begin{align*}
\widetilde{\tau}_{k}[x_{k}(s)]  &  =\Delta_{k-1}\widetilde{\sigma}%
_{k-1}\left[  x_{k-1}(s)\right]  +\Delta_{k-1}\widetilde{\tau}_{k-1}\left[
x_{k-1}(s)\right]  \frac{\Delta x_{k}(s)-\nabla x_{k}(s)}{4}\\
&  +\frac{\widetilde{\tau}_{k-1}[x_{k-1}(s+1)]+\widetilde{\tau}_{k-1}%
[x_{k-1}(s)]}{2}\frac{\Delta x_{k}(s)+\nabla x_{k}(s)}{2\Delta x_{k-1}(s)},\\
\widetilde{\tau}_{0}[x_{0}(s)]  &  =\widetilde{\tau}[x(s)];
\end{align*}

\[
\mu_{k}=\mu_{k-1}+\Delta_{k-1}\widetilde{\tau}_{k-1}\left[  x_{k-1}(s)\right]
,~\mu_{0}=\lambda.
\]
To study additional properties of solutions of (\ref{general equation}) it is
convenient to use the equation
\[
\frac{1}{2}\left[  \Delta_{k}z_{k}(s)+\nabla_{k}z_{k}(s)\right]  =\Delta
_{k}z_{k}(s)-\frac{1}{2}\Delta\left[  \nabla_{k}z_{k}(s)\right]
\]
and to rewrite (\ref{general equation}) in the equivalent form
\begin{equation}
\sigma_{k}(s)\Delta_{k-1}\nabla_{k}z_{k}(s)+\tau_{k}(s)\Delta_{k}z_{k}%
(s)+\mu_{k}z_{k}(s)=0, \label{operator0}%
\end{equation}
where
\begin{align}
&  \sigma_{k}(s)=\widetilde{\sigma}_{k}[x_{k}(s)]-\frac{1}{2}\widetilde{\tau
}_{k}[x_{k}(s)]\nabla x_{k+1}(s),\label{sigmak0}\\
&  \tau_{k}(s)=\widetilde{\tau}_{k}[x_{k}(s)]. \label{tauk0}%
\end{align}
We can find that
\begin{align}
&  \tau_{k}(s)=\frac{\sigma(s+k)-\sigma(s)+\tau(s+k)\nabla x_{1}(s+k)}{\nabla
x_{k+1}(s)},\label{recurrence1}\\
&  \mu_{k}=\lambda+\sum_{j=0}^{k-1}\Delta_{j}\tau_{j}(s). \label{recurrence2}%
\end{align}

\begin{remark}
When $k$ is a negative integer, we also denote the righthand side of
(\ref{recurrence1}) as $\tau_{k}$.
\end{remark}

Writing the equation (\ref{operator0}) into self-adjoint form:
\[
\Delta_{k-1}\left[  \sigma_{k}(s)\rho_{k}(s)\nabla_{k}z_{k}(s)\right]
+\mu_{k}\rho_{k}(s)z_{k}(s)=0.
\]
Here $\rho_{k}(s)$ satisfies the Pearson type difference equation
\[
\Delta_{k-1}\left[  \sigma_{k}(s)\rho_{k}(s)\right]  =\tau_{k}(s)\rho_{k}(s).
\]
Let $\rho(s)=\rho_{0}(s)$, we can find that
\[
\rho_{k}(s)=\rho(s+k)\prod_{i=1}^{k}\sigma(s+i)
\]
If for a positive integer $n$,
\begin{equation}
\lambda=\lambda_{n}:=-\sum_{j=0}^{n-1}\Delta_{j}\tau_{j}(s),~\text{and}%
~\lambda_{m}\neq\lambda_{n}~\text{for}~m=0,1,\dots,n-1, \label{lambdan}%
\end{equation}
then the equation (\ref{operator}) has a polynomial solution $y_{n}[x(s)]$ of
degree $n$ about $x(s)$. It is expressed by the difference analog of the
Rodrigues formula \cite{nikiforov1983,nikiforov1988,nikiforov1991}:
\[%
\begin{split}
y_{n}[x(s)]  &  =\frac{1}{\rho(s)}\nabla_{n}^{(n)}\left[  \rho_{n}(s)\right]
\\
&  =\frac{1}{\rho(s)}\Delta_{-n}^{(n)}\left[  \rho_{n}(s-n)\right]  .
\end{split}
\]

\section{Explicit form of $\tau_{k} (s),\mu_{k}$ and $\lambda_{n}$}

Now, we deduce the explicit form of $\tau_{k} (s),\mu_{k}$ and $\lambda_{n}$
under the lattice (\ref{lattice1}) and (\ref{lattice2}), respectively.

\begin{proposition}
\label{expresstau} Given any integer $k$, if $x(s)=c_{1}q^{s}+c_{2}%
q^{-s}+c_{3}$, then
\begin{align*}
\tau_{k}(s)  &  =\left[  \frac{q^{k}-q^{-k}}{q^{\frac{1}{2}}-q^{-\frac{1}{2}}%
}\frac{\widetilde{\sigma}^{\prime\prime}}{2}+\left(  q^{k}+q^{-k}\right)
\frac{\widetilde{\tau}^{\prime}}{2}\right]  x_{k}(s)+c(k)\\
&  =[\nu(2k)\frac{{\tilde{\sigma}^{\prime\prime}}}{2}+\alpha(2k)\tilde{\tau
}^{\prime}]x_{k}(s)+c(k)\\
&  =\kappa_{2k+1}x_{k}(s)+c(k),
\end{align*}
where
\[
\nu(\mu)=\left\{
{\begin{array}{*{20}c} {\frac{{q^{\frac{\mu }{2}} - q^{ - \frac{\mu }{2}} }}{{q^{\frac{1}{2}} - q^{ - \frac{1}{2}} }}} \\ \mu \\ \end{array}}%
\right.  ,\alpha(\mu)=\left\{
{\begin{array}{*{20}c} {\frac{{q^{\frac{\mu }{2}} + q^{ - \frac{\mu }{2}} }}{2}} \\ 1 \\ \end{array}}%
\right.  ,
\]
and
\[
\kappa_{\mu}=\alpha(\mu-1)\tilde{\tau}^{\prime}+\nu(\mu-1)\frac{{\tilde
{\sigma}^{\prime\prime}}}{2}.
\]
If $x(s)=\widetilde{c}_{1}s^{2}+\widetilde{c}_{2}s+\widetilde{c}_{3}$, then
\begin{align*}
\tau_{k}(s)  &  =\left[  k\widetilde{\sigma}^{\prime\prime}+\widetilde{\tau
}^{\prime}\right]  x_{k}(s)+\widetilde{c}(k)\\
&  =\kappa_{2k+1}x_{k}(s)+\widetilde{c}(k),
\end{align*}
where $c(k),\widetilde{c}(k)$ are functions with respect to $k$:
\end{proposition}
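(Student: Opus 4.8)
The plan is to take the closed expression for $\tau_k(s)$ in (\ref{recurrence1}),
\[
\tau_k(s)=\frac{N(s)}{\nabla x_{k+1}(s)},\qquad N(s):=\sigma(s+k)-\sigma(s)+\tau(s+k)\,\nabla x_1(s+k),
\]
with $\sigma=\sigma_0=\widetilde{\sigma}[x(s)]-\tfrac12\widetilde{\tau}[x(s)]\nabla x_1(s)$ and $\tau=\tau_0=\widetilde{\tau}[x(s)]$, and to substitute the two lattice shapes directly. The theory recalled above already guarantees that $\tau_k(s)=\widetilde{\tau}_k[x_k(s)]$ is a polynomial of degree at most one in $x_k(s)$, so I write $\tau_k(s)=a_k\,x_k(s)+c(k)$ and only need to identify the leading coefficient $a_k$ and the constant term. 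Since $\widetilde{\sigma}''$ and $\widetilde{\tau}'$ are twice the quadratic coefficient of $\widetilde{\sigma}$ and the linear coefficient of $\widetilde{\tau}$ respectively, the whole problem is to see how these two numbers propagate through the quotient.

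For the $q$-lattice $x(s)=c_1q^s+c_2q^{-s}+c_3$ the computation is driven by the elementary identities
\[
\nabla x_1(s)=(q^{1/2}-q^{-1/2})(c_1q^s-c_2q^{-s}),\qquad \nabla x_{k+1}(s)=(q^{1/2}-q^{-1/2})\bigl(c_1q^{\,s+k/2}-c_2q^{-s-k/2}\bigr),
\]
obtained by direct subtraction. Substituting these into $\sigma_0$ shows that $\sigma(s)$ is a Laurent polynomial in $q^s$ whose coefficient of $q^{2s}$ is $P=\bigl[\tfrac{\widetilde{\sigma}''}{2}-\tfrac12\widetilde{\tau}'(q^{1/2}-q^{-1/2})\bigr]c_1^2$. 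The cleanest way to extract $a_k$ is to clear the denominator, i.e. to use $N(s)=\bigl(a_k\,x_k(s)+c(k)\bigr)\nabla x_{k+1}(s)$, and compare the coefficients of the top power $q^{2s}$ on both sides: on the right this coefficient is $a_k(q^{1/2}-q^{-1/2})c_1^2q^{k}$ (the constant $c(k)$ contributes nothing to $q^{2s}$, since $\nabla x_{k+1}$ has no constant part), while on the left it collapses to $c_1^2\bigl[\tfrac{\widetilde{\sigma}''}{2}(q^{2k}-1)+\tfrac12\widetilde{\tau}'(q^{1/2}-q^{-1/2})(q^{2k}+1)\bigr]$. Solving gives exactly
\[
a_k=\frac{\widetilde{\sigma}''}{2}\,\frac{q^{k}-q^{-k}}{q^{1/2}-q^{-1/2}}+\frac{\widetilde{\tau}'}{2}\,(q^{k}+q^{-k})=\kappa_{2k+1},
\]
and $c(k)$ is then recovered by matching the $q^{s}$-coefficient, in which $c(k)$ appears linearly once $a_k$ is known.

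The quadratic lattice $x(s)=\widetilde{c}_1s^2+\widetilde{c}_2s+\widetilde{c}_3$ is handled identically, with ordinary polynomials in $s$ replacing Laurent polynomials in $q^s$. Here $\nabla x_1(s)=2\widetilde{c}_1s+\widetilde{c}_2$ and $\nabla x_{k+1}(s)=\widetilde{c}_1(2s+k)+\widetilde{c}_2$, the modified $\sigma_0(s)$ is a degree-four polynomial in $s$ with top coefficient $\tfrac{\widetilde{\sigma}''}{2}\widetilde{c}_1^2$, and the $s^3$-term of $N(s)$ reduces to $2\widetilde{c}_1^2(k\widetilde{\sigma}''+\widetilde{\tau}')s^3$. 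Dividing by the leading $2\widetilde{c}_1s$ of $\nabla x_{k+1}(s)$ yields $a_k=k\widetilde{\sigma}''+\widetilde{\tau}'$, and this is again $\kappa_{2k+1}$ by the second branch of the definitions of $\nu$ and $\alpha$ (where $\nu(2k)=2k$ and $\alpha(2k)=1$); the constant $\widetilde{c}(k)$ comes from the $s^2$-coefficient. Thus both lattices give the uniform form $\tau_k(s)=\kappa_{2k+1}x_k(s)+(\text{const})$.

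I expect the real work to be bookkeeping rather than anything conceptual. The one point that must be checked honestly is that the quotient $N(s)/\nabla x_{k+1}(s)$ is genuinely of degree one in $x_k(s)$ — that is, that $\nabla x_{k+1}(s)$ divides $N(s)$ exactly, with no surviving $q^{2s}$ or $q^{-2s}$ remainder (respectively no leftover $s^3/s$ behaviour in the polynomial case) — which is precisely where the structural input $\tau_k=\widetilde{\tau}_k$ from the cited theory is used. The remaining difficulty is purely clerical: keeping the half-integer powers of $q$ aligned so that the cross terms assemble into the symmetric combinations $q^{k}\pm q^{-k}$, and then carrying out the longer but routine computation of the constant terms $c(k)$ and $\widetilde{c}(k)$ from the low-order coefficients of $N(s)$.
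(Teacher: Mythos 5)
Your computations are correct --- the leading-coefficient extraction in both lattices does reproduce $\kappa_{2k+1}$ --- but your logical route differs from the paper's in one essential point. The paper never assumes that the quotient in (\ref{recurrence1}) is linear in $x_{k}(s)$: it proves this by explicit division, factoring $x^{2}(s+k)-x^{2}(s)=[x(s+k)-x(s)][x(s+k)+x(s)]$ and observing that $x(s+k)-x(s)$ is an exact constant multiple of the denominator $\nabla x_{k+1}(s)=\Delta x_{k-1}(s)$ while $x(s+k)+x(s)$ is linear in $x_{k}(s)$, with the analogous identities (\ref{doubledelta1})--(\ref{doubledelta2}) handling the $\widetilde{\tau}$-part. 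That computation delivers the linearity, the coefficient $\kappa_{2k+1}$, and the explicit constants $c(k),\widetilde{c}(k)$ simultaneously, uniformly in the integer $k$, with no external input. You instead clear the denominator and match top Laurent (resp.\ polynomial) coefficients, which is shorter but rests on the a priori fact that $\tau_{k}(s)=\widetilde{\tau}_{k}[x_{k}(s)]$ is a polynomial of degree at most one in $x_{k}(s)$ --- a dependence you flag yourself. The caveat is where that citation can legitimately come from: in the paper's own development, linearity of $\tau_{k}$ in $x_k$ is obtained in Section 3 only for nonnegative $k$ (via the equations satisfied by $z_{k}=\Delta_{0}^{(k)}y$); for negative $k$ the right-hand side of (\ref{recurrence1}) is merely a definition (see the Remark), yet the proposition claims ``any integer $k$'' and is precisely what gets applied later with negative indices ($\tau_{-2}(s+1)$ in Corollary \ref{corollary2}, $\tau_{-(n+2)}(s+1)$ in Section 6). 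So to cover negative $k$ you must invoke Suslov's lemma for arbitrary real $\nu$ (quoted later in the paper as Lemma \ref{lemma3}), or else verify the divisibility directly --- which is exactly what the paper's factorization accomplishes. Finally, the paper's statement comes with explicit formulas for $c(k)$ and $\widetilde{c}(k)$; your recipe for recovering them (match the $q^{s}$-coefficient, in which $c(k)$ enters linearly once $a_{k}$ is known) is sound but left unexecuted. In short: your approach buys brevity by separating structure (linearity, outsourced to the general theory) from arithmetic (one coefficient comparison); the paper's buys self-containedness, validity for all integers $k$ at once, and the constant terms.
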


\begin{align*}
c(k)  &  =c_{3}(1-q^{\frac{k}{2}})(q^{\frac{k}{2}}-q^{-k})+c_{3}%
\frac{(2-q^{\frac{k}{2}}-q^{-\frac{k}{2}})(q^{\frac{k}{2}}-q^{-\frac{k}{2}}%
)}{q^{\frac{1}{2}}-q^{-\frac{1}{2}}}\\
&  +\widetilde{\tau}(0)(q^{\frac{k}{2}}+q^{-\frac{k}{2}})+\widetilde{\sigma
}(0)\frac{q^{\frac{k}{2}}-q^{-\frac{k}{2}}}{q^{\frac{1}{2}}-q^{-\frac{1}{2}}}%
\end{align*}

\[
\widetilde{c}(k)=\frac{\widetilde{\sigma}\prime\prime}{4}\widetilde{c}%
_{1}k^{3}+\frac{3\widetilde{\tau}\prime}{4}\widetilde{c}_{1}k^{2}%
+\widetilde{\sigma}(0)k+2\widetilde{\tau}(0).
\]

\begin{proof}
We only prove the case that $x(s)=c_{1}q^{s}+c_{2}q^{-s}+c_{3}$. By
(\ref{sigmak0}), (\ref{tauk0}) and (\ref{recurrence1}), we have
\begin{equation}
\tau_{k}(s)=\frac{\widetilde{\sigma}[x(s+k)]-\widetilde{\sigma}[x(s)]+\frac
{1}{2}\widetilde{\tau}[x(s+k)]\Delta x(s+k-\frac{1}{2})+\frac{1}%
{2}\widetilde{\tau}[x(s)]\Delta x(s-\frac{1}{2})}{\Delta x_{k-1}(s)}.
\end{equation}
After some simple computations, we obtain
\begin{align*}
&  x(s+k)-x(s)=(q^{\frac{k}{2}}-q^{-\frac{k}{2}})(c_{1}q^{s+\frac{k}{2}}%
-c_{2}q^{-s-\frac{k}{2}}),\\
&  x(s+k)+x(s)=(q^{\frac{k}{2}}+q^{-\frac{k}{2}})x_{k}(s)+c_{3}(2-q^{\frac
{k}{2}}-q^{-\frac{k}{2}}),\\
&  \Delta x_{k-1}(s)=(q^{\frac{1}{2}}-q^{-\frac{1}{2}})(c_{1}q^{s+\frac{k}{2}%
}-c_{2}q^{-s-\frac{k}{2}}).
\end{align*}
Moreover, $\widetilde{\sigma}[x(s)]=\frac{\widetilde{\sigma}^{\prime\prime}%
}{2}x^{2}(s)+\widetilde{\sigma}^{\prime}(0)x(s)+\widetilde{\sigma}(0).$ Then,
\begin{equation}%
\begin{split}
&  \frac{\widetilde{\sigma}[x(s+k)]-\widetilde{\sigma}[x(s)]}{\Delta
x_{k-1}(s)}=\frac{\widetilde{\sigma}^{\prime\prime}}{2}\frac{x^{2}%
(s+k)-x^{2}(s)}{\Delta x_{k-1}(s)}+\widetilde{\sigma}^{\prime}(0)\frac
{x(s+k)-x(s)}{\Delta x_{k-1}(s)}\\
&  =\frac{\widetilde{\sigma}^{\prime\prime}}{2}\frac{q^{k}-q^{-k}}{q^{\frac
{1}{2}}-q^{-\frac{1}{2}}}x_{k}(s)+\widetilde{\sigma}^{\prime}(0)\frac
{q^{\frac{k}{2}}-q^{-\frac{k}{2}}}{q^{\frac{1}{2}}-q^{-\frac{1}{2}}}%
+c_{3}\frac{(2-q^{\frac{k}{2}}-q^{-\frac{k}{2}})(q^{\frac{k}{2}}-q^{-\frac
{k}{2}})}{q^{\frac{k}{2}}-q^{-\frac{k}{2}}}.
\end{split}
\label{widesigmak}%
\end{equation}
Moreover,
\end{proof}

\begin{align}
&  x(s+k)\Delta x(s+k-\frac{1}{2})+x(s)\Delta x(s-\frac{1}{2}%
)\label{doubledelta1}\\
&  =(q^{k}+q^{-k})(q^{\frac{1}{2}}-q^{-\frac{1}{2}})(c_{1}q^{s+\frac{k}{2}%
}-c_{2}q^{-s-\frac{k}{2}})x_{k}(s)\nonumber\\
&  +c_{3}(q^{\frac{1}{2}}-q^{-\frac{1}{2}})(c_{1}q^{s+\frac{k}{2}}%
-c_{2}q^{-s-\frac{k}{2}})(1-q^{\frac{k}{2}})(q^{\frac{k}{2}}-q^{-k}),\nonumber
\end{align}

\begin{equation}
\Delta x(s+k-\frac{1}{2})+\Delta x(s-\frac{1}{2})=(q^{\frac{1}{2}}%
-q^{-\frac{1}{2}})(q^{\frac{k}{2}}+q^{-\frac{k}{2}})(c_{1}q^{s+\frac{k}{2}%
}-c_{2}q^{-s-\frac{k}{2}}), \label{doubledelta2}%
\end{equation}
and $\tau\lbrack x(s)]=\widetilde{\tau}^{\prime}x(s)+\widetilde{\tau}(0). $
Therefore,
\begin{equation}%
\begin{split}
&  \frac{1}{2}\frac{\widetilde{\tau}[x(s+k)]\Delta x(s+k-\frac{1}%
{2})+\widetilde{\tau}[x(s)]\Delta x(s-\frac{1}{2})}{\Delta x_{k-1}(s)}\\
&  =\frac{\widetilde{\tau}^{\prime}}{2}\frac{x(s+k)\Delta x(s+k-\frac{1}%
{2})+x(s)\Delta x(s-\frac{1}{2})}{\Delta x_{k-1}(s)}+\frac{\widetilde{\tau
}(0)}{2}\frac{\Delta x(s+k-\frac{1}{2})+\Delta x(s-\frac{1}{2})}{\Delta
x_{k-1}(s)}\\
&  =\frac{\widetilde{\tau}^{\prime}}{2}(q^{k}+q^{-k})x_{k}(s)+c_{3}%
(1-q^{\frac{k}{2}})(q^{\frac{k}{2}}-q^{-k})+\widetilde{\tau}(0)(q^{\frac{k}%
{2}}+q^{-\frac{k}{2}}).
\end{split}
\label{widetauk}%
\end{equation}
Substituting (\ref{widesigmak}) and (\ref{widetauk}) into (\ref{tauk0}), we
get our conclusion.

\begin{lemma}
(Suslov\cite{suslov1989})\label{lemma0} For $\alpha(\mu),\nu(\mu)$, We have
\[
\sum\limits_{j=0}^{k-1}{\alpha(2j)}=\alpha(k-1)\nu(k),\sum\limits_{j=0}%
^{k-1}{\nu(2j)}=\nu(k-1)\nu(k).
\]

\end{lemma}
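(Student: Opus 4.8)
The plan is to reduce both identities to two elementary finite-difference relations satisfied by $\nu$ and $\alpha$ and then to sum by telescoping. Throughout I treat the two lattice families together: recall that in the $q$-case $\nu(\mu)=(q^{\mu/2}-q^{-\mu/2})/(q^{1/2}-q^{-1/2})$ and $\alpha(\mu)=(q^{\mu/2}+q^{-\mu/2})/2$, while in the quadratic case $\nu(\mu)=\mu$ and $\alpha(\mu)=1$; in either case one has the special values $\nu(0)=0$, $\nu(1)=1$, $\nu(-1)=-1$.

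First I would establish, by direct substitution of the definitions (with the trivial quadratic case checked separately), the two auxiliary identities
\[
\nu(\mu+1)-\nu(\mu-1)=2\alpha(\mu), \qquad \nu(a)\alpha(b)=\tfrac12\bigl(\nu(a+b)+\nu(a-b)\bigr).
\]
Writing $p=q^{1/2}$, the first follows from $p^{\mu+1}-p^{\mu-1}=p^{\mu}(p-p^{-1})$ (and reads $2=2\cdot 1$ in the quadratic case); the second is the standard product-to-sum computation $(p^{a}-p^{-a})(p^{b}+p^{-b})=(p^{a+b}-p^{-(a+b)})+(p^{a-b}-p^{-(a-b)})$ (and reads $a=\tfrac12((a+b)+(a-b))$ in the quadratic case). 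Specializing the second identity with $a=b=\mu$ and using $\nu(0)=0$ gives the companion relation $2\alpha(\mu)\nu(\mu)=\nu(2\mu)$, and with $a=k$, $b=k-1$ it gives $\alpha(k-1)\nu(k)=\tfrac12(\nu(2k-1)+1)$. These are the only genuine computations in the proof, and they form the one step where care is needed, since each identity must be confirmed to hold in both lattice families.

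For the first sum I would set $T_k=\alpha(k-1)\nu(k)$ and show that $T_k-T_{k-1}=\alpha(2(k-1))$: expanding both products by the product-to-sum identity, the two $\nu(1)$ terms cancel and what remains is $\tfrac12(\nu(2k-1)-\nu(2k-3))=\alpha(2(k-1))$ by the first auxiliary identity. Telescoping then gives $\sum_{j=0}^{k-1}\alpha(2j)=\sum_{j=1}^{k}(T_j-T_{j-1})=T_k-T_0$, and $T_0=\alpha(-1)\nu(0)=0$, so the sum equals $T_k=\alpha(k-1)\nu(k)$, as claimed.

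For the second sum the argument is parallel: set $S_k=\nu(k-1)\nu(k)$ and factor $S_k-S_{k-1}=\nu(k-1)\bigl(\nu(k)-\nu(k-2)\bigr)$. By the first auxiliary identity $\nu(k)-\nu(k-2)=2\alpha(k-1)$, and the companion relation then gives $\nu(k-1)\cdot 2\alpha(k-1)=\nu(2(k-1))$, so $S_k-S_{k-1}=\nu(2(k-1))$. Telescoping with the vanishing base term $S_0=\nu(-1)\nu(0)=0$ yields $\sum_{j=0}^{k-1}\nu(2j)=S_k=\nu(k-1)\nu(k)$. The main obstacle is purely bookkeeping: matching the telescoped index range to $j=0,\dots,k-1$ and confirming that both base terms vanish (they do, because $\nu(0)=0$); in the quadratic case the whole argument collapses to the elementary sums $\sum_{j=0}^{k-1}1=k$ and $\sum_{j=0}^{k-1}2j=k(k-1)$.
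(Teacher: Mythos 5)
Your proof is correct: both auxiliary identities $\nu(\mu+1)-\nu(\mu-1)=2\alpha(\mu)$ and $\nu(a)\alpha(b)=\tfrac12\bigl(\nu(a+b)+\nu(a-b)\bigr)$ hold in both lattice families, the difference computations $T_k-T_{k-1}=\alpha(2(k-1))$ and $S_k-S_{k-1}=\nu(2(k-1))$ follow exactly as you say, and the base terms $T_0=\alpha(-1)\nu(0)$ and $S_0=\nu(-1)\nu(0)$ vanish because $\nu(0)=0$, so the telescoping closes. Note, however, that the paper offers no proof of this lemma at all: it is quoted verbatim from Suslov's survey with only a citation. So there is no ``paper's approach'' to compare against; your argument supplies the missing justification. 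Your route—guessing the closed form, verifying its first difference via product-to-sum identities, and telescoping—is a clean structural proof that treats the $q$-quadratic and purely quadratic lattices uniformly; the more pedestrian alternative (which is essentially what a direct verification in Suslov's setting amounts to) is to expand $\alpha(2j)$ and $\nu(2j)$ as powers of $q^{j}$ and sum the two geometric series, then match the result against the expanded product $\alpha(k-1)\nu(k)$ or $\nu(k-1)\nu(k)$. Your version has the advantage that the same two identities you establish (the difference relation and the linearization formula) are exactly the ones that recur throughout the theory of these lattices, so the proof explains \emph{why} the closed forms are products of $\nu$'s and $\alpha$'s rather than merely confirming them.
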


From (\ref{recurrence2}) (\ref{lambdan}) and Lemma \ref{lemma0}, we have

\begin{proposition}
\label{expressmu} If $x(s)=c_{1}q^{s}+c_{2}q^{-s}+c_{3}$ or
$x(s)=\widetilde{c}_{1}s^{2}+\widetilde{c}_{2}s+\widetilde{c}_{3}$, then%

\begin{align}
\mu_{k}= \lambda+\kappa_{k} \nu(k).
\end{align}
where
\begin{align*}
\kappa_{k} = \alpha(k - 1)\tilde\tau^{\prime}+ \frac{1}{2}\nu(k -
1)\tilde\sigma^{\prime\prime}.
\end{align*}

\end{proposition}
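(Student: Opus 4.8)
The plan is to combine the recurrence (\ref{recurrence2}) with the explicit expression for $\tau_k(s)$ furnished by Proposition \ref{expresstau}, and then to collapse the resulting sum using Lemma \ref{lemma0}. First I would evaluate the single summand $\Delta_j\tau_j(s)$ appearing in (\ref{recurrence2}). By Proposition \ref{expresstau}, in either lattice case one has $\tau_j(s)=\kappa_{2j+1}x_j(s)+c(j)$ (respectively $\kappa_{2j+1}x_j(s)+\widetilde{c}(j)$), where the coefficient $\kappa_{2j+1}$ and the inhomogeneous term depend only on $j$ and not on $s$. Since $\Delta_j x_j(s)=\Delta x_j(s)/\Delta x_j(s)=1$ and $\Delta_j$ annihilates any quantity that is constant in $s$, it follows at once that $\Delta_j\tau_j(s)=\kappa_{2j+1}$. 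Substituting into (\ref{recurrence2}) gives $\mu_k=\lambda+\sum_{j=0}^{k-1}\kappa_{2j+1}$.

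Next I would expand $\kappa_{2j+1}$ through its definition. From $\kappa_\mu=\alpha(\mu-1)\widetilde\tau'+\nu(\mu-1)\widetilde\sigma''/2$ we obtain $\kappa_{2j+1}=\alpha(2j)\widetilde\tau'+\nu(2j)\widetilde\sigma''/2$, so that
\[
\sum_{j=0}^{k-1}\kappa_{2j+1}=\widetilde\tau'\sum_{j=0}^{k-1}\alpha(2j)+\frac{\widetilde\sigma''}{2}\sum_{j=0}^{k-1}\nu(2j).
\]
At this point Lemma \ref{lemma0} supplies the two closed forms $\sum_{j=0}^{k-1}\alpha(2j)=\alpha(k-1)\nu(k)$ and $\sum_{j=0}^{k-1}\nu(2j)=\nu(k-1)\nu(k)$, whence
\[
\sum_{j=0}^{k-1}\kappa_{2j+1}=\nu(k)\Big[\alpha(k-1)\widetilde\tau'+\tfrac12\nu(k-1)\widetilde\sigma''\Big]=\nu(k)\kappa_k.
\]
Combining with the previous paragraph yields $\mu_k=\lambda+\kappa_k\nu(k)$, which is the assertion.

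The computation is essentially bookkeeping, and I do not expect a genuine obstacle. The only point that requires care is the reduction $\Delta_j\tau_j(s)=\kappa_{2j+1}$: one must recognize that although $\tau_j$ is a genuine linear polynomial in $x_j(s)$, applying the operator $\Delta_j$ tuned to the \emph{same} lattice $x_j$ collapses it to its constant leading coefficient while killing the $s$-independent remainder. Once that is in hand, the unified treatment of both lattices (\ref{lattice1}) and (\ref{lattice2}) is automatic, since Lemma \ref{lemma0} holds verbatim for both branches of $\alpha$ and $\nu$; in particular no case distinction is needed in the summation step.
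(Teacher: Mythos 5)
Your proof is correct and takes essentially the same route as the paper: it combines the recurrence (\ref{recurrence2}) with Proposition \ref{expresstau} to identify each summand $\Delta_j\tau_j(s)$ as the constant $\kappa_{2j+1}=\alpha(2j)\tilde\tau'+\nu(2j)\tilde\sigma''/2$, and then collapses the sum via Lemma \ref{lemma0}. The only cosmetic difference is that you treat both lattices uniformly through the two-branch definitions of $\alpha$ and $\nu$ (which is legitimate, since Lemma \ref{lemma0} holds for both branches), whereas the paper splits into the two cases and handles the quadratic lattice by directly summing $j\tilde\sigma''+\tilde\tau'$.
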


\begin{proof}
If $x(s)=c_{1}q^{s}+c_{2}q^{-s}+c_{3}$, then%

\begin{align*}
\mu_{k}=\lambda+\sum_{j=0}^{k-1}\left[  \frac{q^{j}-q^{-j}}{q^{\frac{1}{2}%
}+q^{-\frac{1}{2}}}\frac{\widetilde{\sigma}^{\prime\prime}}{2}-(q^{j}%
+q^{-j})\frac{\widetilde{\tau}^{\prime}}{2}\right]  ;
\end{align*}

\begin{align*}%
\begin{array}
[c]{l}%
= \lambda+\sum\limits_{j = 0}^{k - 1} {\nu(2j)} \frac{{\tilde\sigma
^{\prime\prime}}}{2} + \sum\limits_{j = 0}^{k - 1} {\alpha(2j)} \tilde
\tau^{\prime}\\
= \lambda+\nu(k - 1)\nu(k)\frac{{\tilde\sigma^{\prime\prime}}}{2} + \alpha(k -
1)\nu(k)\tilde\tau^{\prime}\\
= \lambda+\kappa_{k} \nu(k)
\end{array}
\end{align*}
where
\begin{align}
\kappa_{k} = \alpha(k - 1)\tilde\tau^{\prime}+ \frac{1}{2}\nu(k -
1)\tilde\sigma^{\prime\prime}.
\end{align}

If $x(s)=\widetilde{c}_{1}s^{2}+\widetilde{c}_{2}s+\widetilde{c}_{3}$, then
\begin{align*}
\mu_{k}  &  = \lambda+\sum_{j=0}^{k-1}\left[  j\widetilde{\sigma}%
^{\prime\prime}+\widetilde{\tau}^{\prime}\right] \\
&  = \lambda+\frac{{(k - 1)k}}{2}\tilde\sigma^{\prime\prime}+ k\tilde
\tau^{\prime}\\
&  = \lambda+\kappa_{k} \nu(k).
\end{align*}

\end{proof}

From Proposition \ref{expressmu} and (\ref{lambdan}), we have
\begin{align}
\lambda_{n} = - n\kappa_{n} .
\end{align}

\section{Adjoint equations}

Let
\begin{align}
\label{operator1}L[y]=\sigma(s)\Delta_{-1}\nabla_{0} y(s)+\tau(s)\Delta
_{0}y(s)+\lambda y(s)=0.
\end{align}
The equation (\ref{operator1}) has a self-adjoint form
\begin{align}
\label{selfadjoint}\Delta_{-1}\left[  \sigma(s)\rho(s)\nabla_{0}y(s)\right]
+\lambda\rho(s)y(s)=0,
\end{align}
where $\rho(s)$ satisfies a Pearson type equation:
\begin{align}
\label{pearson1}\Delta_{-1}\left[  \sigma(s)\rho(s)\right]  =\tau(s)\rho(s).
\end{align}

In order to obtain extension of Rodrigues formula in the nonuniform lattice
case, it is crucial to define and build second order adjoint difference
equations.
%In the differential case and its $q$-analogue, the adjoint operator $L^*[y]$ corresponding to $L[y]$ has the property %\cite{area2003,area2005} that
%\begin{align}\label{relation}
%L^*[\rho y]=\rho L[y].
%\end{align}
%In the nonuniform lattice case, we use (\ref{relation}) as the definition of adjoint equation.
%Thus, to compute the adjoint equation of (\ref{operator1}),

Let $w(s)=\rho(s)y(s)$. Then
\begin{align}
\label{nablay}\nabla_{0}y(s)=\nabla_{0}\frac{w(s)}{\rho(s)}=\frac
{\rho(s-1)\nabla_{0}w(s)-w(s-1)\nabla_{0}\rho(s)}{\rho(s)\rho(s-1)}%
\end{align}
Substituting (\ref{nablay}) into (\ref{selfadjoint}), we obtain
\begin{align}
\label{selfadjoint1}\Delta_{-1}\left[  \sigma(s)(\nabla_{0}w(s)-w(s-1)\frac
{\nabla_{0}\rho(s)}{\rho(s-1)})\right]  +\lambda w(s)=0.
\end{align}
By the Pearson type equation (\ref{pearson1}),
\[
\frac{\Delta[\sigma(s)\rho(s)]}{\Delta x_{-1}(s)}=\frac{\sigma(s+1)\Delta
\rho(s)+\Delta\sigma(s)\rho(s)}{\Delta x_{-1}(s)}=\tau(s)\rho(s).
\]
Then
\begin{align}
\label{rho1}\frac{\nabla\rho(s)}{\rho(s-1)}=\frac{\tau(s-1)\nabla
_{-1}(s)-\nabla\sigma(s)}{\sigma(s)}%
\end{align}
Substituting (\ref{rho1}) into (\ref{selfadjoint1}), we have
\begin{align}
\label{adjoint equation}L^{*}[w]:=\sigma^{*}(s)\Delta_{-1}\nabla_{0}
w(s)+\tau^{*}(s)\Delta_{0} w(s)+\lambda^{*}w(s)=0,
\end{align}
where
\begin{align}
&  \sigma^{*}(s)=\sigma(s-1)+\tau(s-1)\nabla x_{-1}(s),\label{sigmastar}\\
&  \tau^{*}(s)=\frac{\sigma(s+1)-\sigma(s-1)}{\Delta x_{-1}(s)}-\tau
(s-1)\frac{\nabla x_{-1}(s)}{\Delta x_{-1}(s)},\label{taustar}\\
&  \lambda^{*}=\lambda-\Delta_{-1}\left(  \tau(s-1)\frac{\nabla x_{-1}%
(s)}{\nabla x(s)}-\frac{\nabla\sigma(s)}{\nabla x(s)}\right)  .
\label{lambdastar}%
\end{align}

\begin{definition}
\label{def} The equation (\ref{adjoint equation}) is called the \emph{adjoint
equation corresponding to} (\ref{operator1}).
\end{definition}

From Definition \ref{def}, it is easy to obtain

\begin{proposition}
\label{pro3} For y(s), we have
\begin{align}
\label{relation}L^{*}[\rho y]=\rho L[y].
\end{align}

\end{proposition}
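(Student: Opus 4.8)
The plan is to recognize that the identity (\ref{relation}) is nothing more than the statement that the whole derivation leading from the self-adjoint form (\ref{selfadjoint}) to the adjoint equation (\ref{adjoint equation}) is a chain of exact algebraic identities, valid for \emph{every} $y(s)$ and not only for solutions of $L[y]=0$. The symbol ``$=0$'' in (\ref{selfadjoint}) and (\ref{adjoint equation}) merely records that a solution is carried to a solution; the computational content underneath is an operator identity. So the proof will consist of reading the construction of $L^{*}$ as such an identity and chaining it with one elementary lemma.

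First I would establish the auxiliary identity
\[
\Delta_{-1}\left[\sigma(s)\rho(s)\nabla_{0}y(s)\right]+\lambda\rho(s)y(s)=\rho(s)L[y]
\]
for arbitrary $y$. Expanding the first term by the product rule in Proposition \ref{product},
\[
\Delta_{-1}\left[(\sigma\rho)(s)\,\nabla_{0}y(s)\right]=\nabla_{0}y(s+1)\,\Delta_{-1}(\sigma\rho)(s)+(\sigma\rho)(s)\,\Delta_{-1}\nabla_{0}y(s),
\]
and then using the Pearson equation (\ref{pearson1}) in the form $\Delta_{-1}(\sigma\rho)(s)=\tau(s)\rho(s)$ together with the elementary identity $\nabla_{0}y(s+1)=\Delta_{0}y(s)$ (both sides equal $(y(s+1)-y(s))/(x(s+1)-x(s))$), the right-hand side collapses to $\rho(s)\left[\sigma(s)\Delta_{-1}\nabla_{0}y(s)+\tau(s)\Delta_{0}y(s)\right]$; adding $\lambda\rho(s)y(s)$ gives exactly $\rho(s)L[y]$.

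Next I would verify that substituting $w=\rho y$ into the left-hand side of that auxiliary identity reproduces $L^{*}[\rho y]$. This is precisely the computation already performed in (\ref{nablay})--(\ref{adjoint equation}): the quotient rule of Proposition \ref{product} yields (\ref{nablay}), pure substitution yields (\ref{selfadjoint1}), and inserting the Pearson relation (\ref{rho1}) and collecting the coefficients of $\Delta_{-1}\nabla_{0}w$, $\Delta_{0}w$ and $w$ produces exactly the $\sigma^{*},\tau^{*},\lambda^{*}$ of (\ref{sigmastar})--(\ref{lambdastar}). The point to stress is that each of these steps is an equality of expressions, so that
\[
L^{*}[\rho y]=\Delta_{-1}\left[\sigma(s)\rho(s)\nabla_{0}y(s)\right]+\lambda\rho(s)y(s).
\]
Chaining this with the auxiliary identity of the previous paragraph gives $L^{*}[\rho y]=\rho L[y]$, which is (\ref{relation}).

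The main obstacle I anticipate is bookkeeping rather than anything conceptual: one must track the argument shifts carefully (the $\rho(s-1)$ and $w(s-1)$ terms, and the factor $\nabla x_{0}(s)$ distinguishing $\nabla_{0}\rho$ from $\nabla\rho$) when passing from (\ref{selfadjoint1}) through (\ref{rho1}), and confirm that the coefficient of each of $\Delta_{-1}\nabla_{0}w$, $\Delta_{0}w$, $w$ matches the stated $\sigma^{*},\tau^{*},\lambda^{*}$. Since the authors have already carried out this expansion in order to \emph{define} $L^{*}$, the proof of (\ref{relation}) essentially reduces to the observation that no step in that expansion used $L[y]=0$; I would simply make that invariance explicit and record the resulting identity.
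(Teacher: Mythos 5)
Your proposal is correct and matches the paper's intent exactly: the paper offers no explicit proof, remarking only that the identity follows ``From Definition \ref{def}'', i.e., from the fact that the construction of $L^{*}$ out of the self-adjoint form (\ref{selfadjoint}) is a chain of exact algebraic identities that never uses $L[y]=0$. Your write-up makes that implicit argument explicit—establishing $\Delta_{-1}\left[\sigma(s)\rho(s)\nabla_{0}y(s)\right]+\lambda\rho(s)y(s)=\rho(s)L[y]$ via the product rule, the Pearson equation (\ref{pearson1}), and $\nabla_{0}y(s+1)=\Delta_{0}y(s)$, then chaining it with the substitution $w=\rho y$ through (\ref{nablay})--(\ref{adjoint equation})—which is precisely the intended proof.
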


\begin{lemma}
(Suslov\cite{suslov1989})\label{lemma3} Let $x=x(s)$ be a lattice for which
(\ref{lattice1}) and (\ref{lattice2}) hold, then the functions $\tilde{\sigma
}_{\nu}(s)$ and $\tau_{\nu}(s)$ defined by the equalities
\begin{equation}
\tilde{\sigma}_{\nu}(s)=\sigma(s)+\frac{1}{2}\tau_{\nu}(s)\nabla x_{\nu+1}(s),
\label{sigmanu}%
\end{equation}%
\begin{equation}
\tau_{\nu}(s)\nabla x_{\nu+1}(s)=\sigma(s+\nu)-\sigma(s)+\tau(s+\nu)\nabla
x_{1}(s+\nu), \label{taunu}%
\end{equation}
are polynomials $\tilde{\sigma}_{\nu}(s)=\tilde{\sigma}_{\nu}(x_{\nu}%
),\tau_{\nu}(s)=\tilde{\tau}_{\nu}(x_{\nu})$ of degree as most two and one
respectively in the variable $x_{\nu}=x(s+\frac{\nu}{2}),\nu\in R.$
\end{lemma}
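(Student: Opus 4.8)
The plan is to prove both assertions by a single direct computation, carried out separately for the two admissible lattice families (\ref{lattice1}) and (\ref{lattice2}); this is essentially the extension of the computation in Proposition \ref{expresstau} from integer indices to arbitrary real $\nu$. Before computing I would put the numerator of (\ref{taunu}) into a symmetric shape, because the asymmetric form is deceptive. Since $\sigma(s),\tau(s)$ are the reduced coefficients of (\ref{operator1}), by (\ref{sigmak0}) and (\ref{tauk0}) at $k=0$ we have $\sigma(s)=\tilde\sigma(x(s))-\tfrac12\tilde\tau(x(s))\nabla x_1(s)$ and $\tau(s)=\tilde\tau(x(s))$, where $\tilde\sigma,\tilde\tau$ are the original hypergeometric polynomials of degrees at most two and one in $x$. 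Substituting and collecting terms, the quantity $\tau_\nu(s)\nabla x_{\nu+1}(s)$ in (\ref{taunu}) becomes
\[
\tilde\sigma(x(s+\nu))-\tilde\sigma(x(s))+\tfrac12\big[\tilde\tau(x(s+\nu))\nabla x_1(s+\nu)+\tilde\tau(x(s))\nabla x_1(s)\big],
\]
and, after (\ref{sigmanu}), $\tilde\sigma_\nu(s)$ becomes
\[
\tfrac12\big[\tilde\sigma(x(s+\nu))+\tilde\sigma(x(s))\big]+\tfrac14\big[\tilde\tau(x(s+\nu))\nabla x_1(s+\nu)-\tilde\tau(x(s))\nabla x_1(s)\big].
\]
Thus the two claims reduce to understanding how the symmetric and antisymmetric combinations of $\tilde\sigma,\tilde\tau$ at the arguments $s+\nu$ and $s$ behave as functions of $x_\nu=x(s+\tfrac\nu2)$.

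For the $q$-lattice (\ref{lattice1}) I would set $u=c_1q^{s+\nu/2}$, $v=c_2q^{-s-\nu/2}$ and $a=q^{\nu/2}$, so that $x_\nu=u+v+c_3$, $x(s+\nu)=au+a^{-1}v+c_3$, $x(s)=a^{-1}u+av+c_3$, while $\nabla x_1(s+\nu)=(q^{1/2}-q^{-1/2})(au-a^{-1}v)$, $\nabla x_1(s)=(q^{1/2}-q^{-1/2})(a^{-1}u-av)$ and $\nabla x_{\nu+1}(s)=(q^{1/2}-q^{-1/2})(u-v)$. The decisive elementary fact, which is the mechanism behind conditions (\ref{condition1}) and (\ref{condition2}), is that $uv=c_1c_2$ is constant; consequently every symmetric expression in $u,v$ is a polynomial in $u+v=x_\nu-c_3$ (for instance $u^2+v^2=(x_\nu-c_3)^2-2c_1c_2$), whereas the antisymmetric quantity $u-v$ is not a function of $x_\nu$ and is exactly the factor appearing in $\nabla x_{\nu+1}(s)$.

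With these identities the two verifications run as follows. For $\tau_\nu(s)$ I would divide the symmetric numerator by $\nabla x_{\nu+1}(s)\propto(u-v)$, so the point is that the numerator is divisible by $(u-v)$ with quotient depending on $s$ only through $x_\nu$: the $\sigma$-difference is proportional to $x(s+\nu)-x(s)=(a-a^{-1})(u-v)$ and to $x^2(s+\nu)-x^2(s)$, both carrying the factor $(u-v)$, while a short computation gives $\tilde\tau(x(s+\nu))\nabla x_1(s+\nu)+\tilde\tau(x(s))\nabla x_1(s)=(q^{1/2}-q^{-1/2})(u-v)\big[(a^2+a^{-2})\tilde\tau'(u+v)+(a+a^{-1})(\tilde\tau'c_3+\tilde\tau(0))\big]$, again divisible by $(u-v)$; dividing and using $u+v=x_\nu-c_3$ yields a polynomial of degree at most one in $x_\nu$. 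For $\tilde\sigma_\nu(s)$ there is no division, and here the antisymmetric $\tilde\tau$-combination is what must be controlled: the same computation gives $\tilde\tau(x(s+\nu))\nabla x_1(s+\nu)-\tilde\tau(x(s))\nabla x_1(s)=(q^{1/2}-q^{-1/2})\big[(a^2-a^{-2})\tilde\tau'(u^2+v^2)+(a-a^{-1})(\tilde\tau'c_3+\tilde\tau(0))(u+v)\big]$, which involves only the symmetric quantities $u^2+v^2$ and $u+v$ and is therefore a polynomial of degree at most two in $x_\nu$; combined with $\tfrac12[\tilde\sigma(x(s+\nu))+\tilde\sigma(x(s))]$, whose quadratic part is governed by $x^2(s+\nu)+x^2(s)$ (degree two in $x_\nu$ by (\ref{condition2})) and whose linear part by $x(s+\nu)+x(s)$ (degree one by (\ref{condition1})), this shows $\tilde\sigma_\nu$ has degree at most two in $x_\nu$.

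The main obstacle is precisely the behaviour of the $\tilde\tau$-terms: individually $\nabla x_1(s+\nu)/\nabla x_{\nu+1}(s)$ is not a function of $x_\nu$ alone (it still depends on $s$ through the ratio $u/v$), so the apparent pole at $u=v$ and this spurious $s$-dependence must be shown to cancel. Both cancellations are forced by the single relation $uv=\mathrm{const}$: in the symmetric sum it produces the overall factor $(u-v)$ that cancels the denominator, and in the antisymmetric difference it collapses everything to symmetric functions of $u,v$. For the quadratic lattice (\ref{lattice2}) I would run the identical scheme with $x(s)=\tilde c_1s^2+\tilde c_2s+\tilde c_3$; there the building blocks are ordinary polynomials in $s$ (for instance $x(s+\nu)-x(s)=\nu\,\nabla x_{\nu+1}(s)$ with $\nabla x_{\nu+1}(s)=2\tilde c_1(s+\tfrac\nu2)+\tilde c_2$), no square roots occur, and the verification is strictly easier; alternatively it follows from the $q\to1$ limit of the first case.
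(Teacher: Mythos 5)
Your proof is correct, but there is nothing in the paper to compare it against directly: the paper states this lemma as a quotation from Suslov \cite{suslov1989} and gives no proof at all. The closest internal analogue is the proof of Proposition \ref{expresstau}, which carries out the same kind of computation, but only for integer $k$, only for $\tau_k$, and only for the lattice (\ref{lattice1}) (the quadratic case is asserted without proof). Your argument subsumes and organizes that computation: rewriting $\sigma(s)=\tilde\sigma(x(s))-\frac{1}{2}\tilde\tau(x(s))\nabla x_1(s)$, $\tau(s)=\tilde\tau(x(s))$ turns (\ref{taunu}) into a symmetric combination and (\ref{sigmanu}) into a half-sum plus a quarter-difference, and the substitution $u=c_1q^{s+\nu/2}$, $v=c_2q^{-s-\nu/2}$ with the invariant $uv=c_1c_2$ isolates the precise mechanism behind (\ref{condition1})--(\ref{condition2}) --- every symmetric polynomial in $u,v$ is a polynomial in $x_\nu-c_3$, while the antisymmetric factor $u-v$ equals $\nabla x_{\nu+1}(s)$ up to the constant $q^{1/2}-q^{-1/2}$ --- which the paper's identities (\ref{doubledelta1})--(\ref{doubledelta2}) verify by brute-force expansion in the integer case. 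I checked your two displayed $\tilde\tau$-identities; they are exact, and the resulting leading coefficient $\frac{q^{\nu}-q^{-\nu}}{q^{1/2}-q^{-1/2}}\frac{\tilde\sigma''}{2}+\frac{q^{\nu}+q^{-\nu}}{2}\tilde\tau'$ of $\tau_\nu$ reduces to $\kappa_{2k+1}$ of Proposition \ref{expresstau} at $\nu=k$, a consistency check worth recording. Two minor points: the division by $u-v$ should be phrased as divisibility of Laurent polynomials in $q^{s}$, so the conclusion is unaffected by the isolated points where $\nabla x_{\nu+1}(s)=0$; and your closing alternative for the lattice (\ref{lattice2}) via a ``$q\to1$ limit'' is not a proof as stated (one must let $c_1,c_2$ depend on $q$ to produce (\ref{lattice2}) in the limit), but this is harmless since your direct verification with $w=\nabla x_{\nu+1}(s)$ and $w^2=4\tilde c_1x_\nu+\tilde c_2^{\,2}-4\tilde c_1\tilde c_3$ goes through immediately.
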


By the use of Proposition \ref{expresstau} and Lemma \ref{lemma3}, it is not
hard to obtain

\begin{corollary}
\label{corollary2} For (\ref{taustar}) and (\ref{lambdastar}),we have
\begin{align}
\label{tau*}\tau^{*} (s) = - \tau_{ - 2} (s + 1) = [\gamma(4)\frac
{{\tilde\sigma^{\prime\prime}}}{2} - \alpha(4)\tilde\tau^{\prime}]x_{0} (s) +
c( - 2),
\end{align}
and
\begin{align}
\label{lambda*}\lambda^{*} = \lambda- \Delta_{ - 1} \tau_{ - 1} (s) = \lambda-
\gamma(2)\frac{{\tilde\sigma^{\prime\prime}}}{2} - \alpha(2)\tilde\tau
^{\prime}=\lambda- \kappa_{ - 1} .
\end{align}

\end{corollary}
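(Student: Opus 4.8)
The plan is to prove both identities by recognizing the right-hand sides of the definitions (\ref{taustar}) and (\ref{lambdastar}) as the function $\tau_\nu$ of Lemma \ref{lemma3} evaluated at \emph{negative} index (extended via (\ref{taunu}), equivalently (\ref{recurrence1})), and then reading off the explicit polynomial form from Proposition \ref{expresstau}. Everything hinges on two elementary shift identities among the half-integer lattices: since $x_k(s)=x(s+\frac{k}{2})$, one checks directly that $\nabla x_{-1}(s+1)=x(s+\frac{1}{2})-x(s-\frac{1}{2})=\Delta x_{-1}(s)$ and $\nabla x_1(s-1)=x(s-\frac{1}{2})-x(s-\frac{3}{2})=\nabla x_{-1}(s)$, together with the trivial $\nabla x_0(s)=\nabla x(s)$. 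I will also use that $\nu$ is odd and $\alpha$ is even in their argument (clear from both branches of their definitions).

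First I would establish the operator identity $\tau^{*}(s)=-\tau_{-2}(s+1)$. Writing Suslov's relation (\ref{taunu}) with $\nu=-2$ and argument $s+1$ gives $\tau_{-2}(s+1)\nabla x_{-1}(s+1)=\sigma(s-1)-\sigma(s+1)+\tau(s-1)\nabla x_1(s-1)$; the two shift identities above convert this into $\tau_{-2}(s+1)=\frac{\sigma(s-1)-\sigma(s+1)}{\Delta x_{-1}(s)}+\tau(s-1)\frac{\nabla x_{-1}(s)}{\Delta x_{-1}(s)}$, which is exactly $-\tau^{*}(s)$ by (\ref{taustar}). The explicit form then follows from Proposition \ref{expresstau} with $k=-2$: using $\nu(-4)=-\nu(4)$ and $\alpha(-4)=\alpha(4)$, and evaluating the coefficient at $s+1$ where $x_{-2}(s+1)=x(s)=x_0(s)$, one obtains the leading term $[\nu(4)\frac{\tilde\sigma^{\prime\prime}}{2}-\alpha(4)\tilde\tau^{\prime}]x_0(s)$ plus the constant $c(-2)$ (here $\gamma$ should be read as $\nu$, which is evidently intended).

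For the second identity I would show that the argument of $\Delta_{-1}$ in (\ref{lambdastar}) is precisely $\tau_{-1}(s)$. Indeed (\ref{taunu}) with $\nu=-1$ reads $\tau_{-1}(s)\nabla x_0(s)=\sigma(s-1)-\sigma(s)+\tau(s-1)\nabla x_1(s-1)$, and the identities $\nabla x_0(s)=\nabla x(s)$, $\nabla x_1(s-1)=\nabla x_{-1}(s)$ give $\tau_{-1}(s)=\tau(s-1)\frac{\nabla x_{-1}(s)}{\nabla x(s)}-\frac{\nabla\sigma(s)}{\nabla x(s)}$, matching the bracket in (\ref{lambdastar}); hence $\lambda^{*}=\lambda-\Delta_{-1}\tau_{-1}(s)$. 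Since Proposition \ref{expresstau} shows $\tau_{-1}(s)=\kappa_{-1}x_{-1}(s)+c(-1)$ is linear in $x_{-1}$, and $\Delta_{-1}x_{-1}(s)=1$ while $\Delta_{-1}$ kills constants, we get $\Delta_{-1}\tau_{-1}(s)=\kappa_{-1}$, that is $\lambda^{*}=\lambda-\kappa_{-1}$; expanding $\kappa_{-1}=\alpha(-2)\tilde\tau^{\prime}+\nu(-2)\frac{\tilde\sigma^{\prime\prime}}{2}$ with the parities of $\alpha,\nu$ reproduces the middle expression.

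The only genuine obstacle is the bookkeeping of the half-integer index shifts: one must line up the arguments $s\pm\frac{1}{2},\,s\pm\frac{3}{2}$ so that $\tau_\nu$ at the shifted, negative index reproduces (\ref{taustar}) and (\ref{lambdastar}) verbatim. Once the shift identities are in hand, the rest is substitution plus the even/odd symmetry of $\alpha$ and $\nu$. I would flag the constant terms—$c(-2)$ and the $\frac{\tilde\sigma^{\prime\prime}}{2}$ contribution to $\kappa_{-1}$—as the precise spots to verify signs against the parity of $c(k)$ and of $\nu$, since the statement's constant terms are where a stray sign is most likely to hide.
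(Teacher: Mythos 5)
Your proposal is correct and takes essentially the same route as the paper: both proofs identify the right-hand sides of (\ref{taustar}) and (\ref{lambdastar}) with $-\tau_{-2}(s+1)$ and $\tau_{-1}(s)$ by applying Suslov's relation (\ref{taunu}) at negative index to a shifted argument (via $\nabla x_{-1}(s+1)=\Delta x_{-1}(s)$ and $\nabla x_{1}(s-1)=\nabla x_{-1}(s)$), and then read off the explicit forms from Proposition \ref{expresstau} together with the parity of $\alpha$ and $\nu$. Your closing flag on the constant terms is warranted: strictly $-\tau_{-2}(s+1)$ carries the constant $-c(-2)$, and the middle expression of (\ref{lambda*}) should read $+\gamma(2)\frac{\tilde\sigma^{\prime\prime}}{2}$ (as in the paper's own proof, since $\kappa_{-1}=\alpha(2)\tilde\tau^{\prime}-\nu(2)\frac{\tilde\sigma^{\prime\prime}}{2}$); these are sign slips in the printed statement, not defects of your argument.
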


\begin{proof}
Since
\[
\sigma(s - 1) - \sigma(s + 1) + \tau(s - 1)\nabla x_{ - 1} (s) = \sigma(s - 1)
- \sigma(s + 1) + \tau(s - 1)\nabla x_{1} (s - 1).
\]
Set $s+1=z$, then by (\ref{taunu}), we have
\[
\sigma(s - 1) - \sigma(s + 1) + \tau(s - 1)\nabla x_{1} (s - 1) = \sigma(z -
2) - \sigma(z) + \tau(z - 2)\nabla x_{1} (z - 2)
\]
\[
= \tau_{ - 2} (z)\nabla x_{ - 1} (z) = \tau_{ - 2} (s + 1)\nabla x_{ - 1} (s +
1) = \tau_{ - 2} (s + 1)\Delta x_{ - 1} (s),
\]
Now from (\ref{taustar}) and Proposition \ref{expresstau}, one has
\[
\tau^{*} (s) = -\tau_{ - 2} (s + 1) = -[\gamma( - 4)\frac{{\tilde
\sigma^{\prime\prime}}}{2} + \alpha( - 4)\tilde\tau^{\prime}]x_{ - 2} (s + 1)
+ c( - 2)
\]
\[
= [\gamma(4)\frac{{\tilde\sigma^{\prime\prime}}}{2} - \alpha(4)\tilde
\tau^{\prime}]x_{0} (s) + c( - 2).
\]
In the same way, from (\ref{lambdastar}) and (\ref{taunu}), we obtain
\[
\lambda^{*} = \lambda- \Delta_{ - 1} \tau_{ - 1} (s) = \lambda- \Delta_{ - 1}
\{ [\gamma(-2)\frac{{\tilde\sigma^{\prime\prime}}}{2} + \alpha(-2)\tilde
\tau]x_{ - 1} (s)\}
\]
\[
= \lambda+ \gamma(2)\frac{{\tilde\sigma^{\prime\prime}}}{2} - \alpha
(2)\tilde\tau^{\prime}=\lambda- \kappa_{ - 1} .
\]

\end{proof}

With respect to the adjoint equation (\ref{adjoint equation}), we find that it
has the following interesting dual properties

\begin{proposition}
\label{pro33} For adjoint equation (\ref{adjoint equation}), we have
\begin{align}
&  \sigma(s)=\sigma^{*}(s-1)+\tau^{*}(s-1)\nabla x_{-1}(s),\label{sigma}\\
&  \tau(s)=\frac{\sigma^{*}(s+1)-\sigma^{*}(s-1)}{\Delta x_{-1}(s)}-\tau
^{*}(s-1)\frac{\nabla x_{-1}(s)}{\Delta x_{-1}(s)},\label{tau}\\
&  \lambda=\lambda^{*}-\Delta_{-1}\left(  \tau^{*}(s-1)\frac{\nabla x_{-1}%
(s)}{\nabla x(s)}-\frac{\nabla\sigma^{*}(s)}{\nabla x(s)}\right)  .
\label{lambda}%
\end{align}

\end{proposition}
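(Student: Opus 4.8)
The plan is to read the three identities (\ref{sigma})--(\ref{lambda}) as the assertion that the adjoint construction of this section is an \emph{involution}: they are obtained from (\ref{sigmastar})--(\ref{lambdastar}) by interchanging the starred and unstarred coefficients, so establishing them amounts to checking that forming the adjoint of the adjoint returns $\sigma,\tau,\lambda$. I would verify each identity by direct substitution, using as the three main inputs the definition (\ref{sigmastar}) of $\sigma^{*}$, the closed form $\tau^{*}(s)=-\tau_{-2}(s+1)$ from Corollary \ref{corollary2}, and the relation (\ref{taunu}) of Lemma \ref{lemma3} specialized to $\nu=-2$.

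For (\ref{sigma}) I would expand the right-hand side by (\ref{sigmastar}) shifted by $-1$ together with $\tau^{*}(s-1)=-\tau_{-2}(s)$, producing $\sigma(s-2)+\tau(s-2)\nabla x_{-1}(s-1)-\tau_{-2}(s)\nabla x_{-1}(s)$. Rewriting $\tau_{-2}(s)\nabla x_{-1}(s)=\sigma(s-2)-\sigma(s)+\tau(s-2)\nabla x_{1}(s-2)$ via (\ref{taunu}) and invoking the half-step coincidence $\nabla x_{1}(s-2)=\nabla x_{-1}(s-1)$, the remaining terms cancel in pairs and leave exactly $\sigma(s)$. For (\ref{tau}) I would likewise expand $\sigma^{*}(s+1)-\sigma^{*}(s-1)$ through (\ref{sigmastar}); after subtracting $\tau^{*}(s-1)\nabla x_{-1}(s)$ and applying the same instance of (\ref{taunu}), everything collapses to $\tau(s)\nabla x_{-1}(s+1)$, whereupon the identity $\nabla x_{-1}(s+1)=\Delta x_{-1}(s)$ completes the division by $\Delta x_{-1}(s)$.

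For (\ref{lambda}) I would first observe, exactly as in the derivation of Corollary \ref{corollary2}, that the bracketed expression is the adjoint analog $\tau^{*}_{-1}(s)=\dfrac{\sigma^{*}(s-1)-\sigma^{*}(s)+\tau^{*}(s-1)\nabla x_{-1}(s)}{\nabla x(s)}$ of $\tau_{-1}$. The already-proven identity (\ref{sigma}) collapses its numerator to $\sigma(s)-\sigma^{*}(s)$, while a one-line computation from (\ref{sigmastar}) gives $\tau_{-1}(s)=\dfrac{\sigma^{*}(s)-\sigma(s)}{\nabla x(s)}$, so that $\tau^{*}_{-1}(s)=-\tau_{-1}(s)$. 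Since Corollary \ref{corollary2} records $\lambda^{*}=\lambda-\Delta_{-1}\tau_{-1}(s)=\lambda-\kappa_{-1}$, the correction term in (\ref{lambda}) equals $\Delta_{-1}\tau^{*}_{-1}(s)=-\Delta_{-1}\tau_{-1}(s)=-\kappa_{-1}$, and therefore $\lambda^{*}-\Delta_{-1}\tau^{*}_{-1}(s)=(\lambda-\kappa_{-1})+\kappa_{-1}=\lambda$.

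The only genuinely delicate point is the half-integer shift bookkeeping: one must keep track of which lattice $x_{k}$ each difference lives on and deploy the coincidences $\nabla x_{1}(s-2)=\nabla x_{-1}(s-1)$ and $\nabla x_{-1}(s+1)=\Delta x_{-1}(s)$ at precisely the right moments. Once $\tau^{*}=-\tau_{-2}(s+1)$ and (\ref{taunu}) are in hand these manipulations are mechanical, so I anticipate no conceptual obstacle beyond careful indexing; the reduction $\tau^{*}_{-1}=-\tau_{-1}$, which lets (\ref{lambda}) inherit directly from Corollary \ref{corollary2} rather than demanding a fresh evaluation of the leading coefficients of $\sigma^{*}$ and $\tau^{*}$, is what keeps the third identity short.
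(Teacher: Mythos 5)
Your proof is correct, but it reaches the first two identities by a more roundabout route than the paper, whose own argument never touches Lemma \ref{lemma3} or Corollary \ref{corollary2}. The paper simply rewrites (\ref{taustar}) as $\tau^{*}(s)\Delta x_{-1}(s)=\sigma(s+1)-\sigma(s-1)-\tau(s-1)\nabla x_{-1}(s)$, combines it with (\ref{sigmastar}) to get $\sigma(s+1)=\sigma^{*}(s)+\tau^{*}(s)\Delta x_{-1}(s)$, and shifts $s\mapsto s-1$ to obtain (\ref{sigma}); then (\ref{tau}) follows by solving (\ref{sigmastar}) for $\tau(s-1)$ and substituting (\ref{sigma}), and (\ref{lambda}) from the chain $\tau(s-1)\nabla x_{-1}(s)-\nabla\sigma(s)=\sigma^{*}(s)-\sigma(s)=\nabla\sigma^{*}(s)-\tau^{*}(s-1)\nabla x_{-1}(s)$. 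You instead feed $\tau^{*}(s-1)=-\tau_{-2}(s)$ (Corollary \ref{corollary2}) back into Suslov's relation (\ref{taunu}) at $\nu=-2$; since Corollary \ref{corollary2} was itself derived from (\ref{taustar}) and (\ref{taunu}), this combination merely reconstitutes the paper's (\ref{taustar2}), so you end up performing the same cancellations with extra half-step bookkeeping (the coincidences $\nabla x_{1}(s-2)=\nabla x_{-1}(s-1)$ and $\nabla x_{-1}(s+1)=\Delta x_{-1}(s)$, both of which you use correctly) and, more importantly, with extra hypotheses: Lemma \ref{lemma3} and Corollary \ref{corollary2} require the lattice to be of the form (\ref{lattice1}) or (\ref{lattice2}), whereas the paper's rearrangement is pure algebra valid for any lattice. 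Your treatment of (\ref{lambda}) — reducing the bracket to $\tau^{*}_{-1}(s)=-\tau_{-1}(s)$ via the already-proven (\ref{sigma}) — is exactly the paper's computation (\ref{tau2}) in different notation, and invoking $\kappa_{-1}$ there is an unnecessary detour: you only need $\lambda^{*}=\lambda-\Delta_{-1}\tau_{-1}(s)$, which is (\ref{lambdastar}) itself, not its lattice-specific evaluation as a constant. What your presentation buys is that it makes the duality later recorded in Corollary \ref{corollary3} (that $\tau=-\tau^{*}_{-2}(s+1)$, mirroring $\tau^{*}=-\tau_{-2}(s+1)$) completely transparent; what it costs is generality and length.
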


\begin{proof}
From (\ref{taustar}) we have
\begin{align}
\tau^{*} (s)\Delta x_{ - 1} (s) = \sigma(s + 1) - \sigma(s - 1) - \tau(s -
1)\nabla x_{ - 1} (s), \label{taustar2}%
\end{align}
from (\ref{sigmastar}) and (\ref{taustar2}), we have
\[
\sigma(s + 1) = \sigma^{*} (s) + \tau^{*} (s)\Delta x_{ - 1} (s),
\]
therefore
\[
\sigma(s) = \sigma^{*} (s - 1) + \tau^{*} (s - 1)\nabla x_{ - 1} (s).
\]
By (\ref{sigmastar}), we obtain
\[
\tau(s - 1) = \frac{{\sigma^{*} (s) - \sigma(s - 1)}}{{\nabla x_{ - 1} (s)}} =
\frac{{\sigma^{*} (s) - \sigma^{*} (s - 2) - \tau^{*} (s - 2)\nabla x_{ - 1}
(s - 1)}}{{\nabla x_{ - 1} (s)}},
\]
therefore
\[
\tau(s) = \frac{{\sigma^{*} (s\mathrm{\ + }1) - \sigma^{*} (s - 1) - \tau^{*}
(s - 1)\nabla x_{ - 1} (s)}}{{\Delta x_{ - 1} (s)}}.
\]
Moreover
\begin{align}%
\begin{split}
\tau(s - 1)\nabla x_{ - 1} (s) - \nabla\sigma(s)  &  = \sigma^{*} (s) -
\sigma(s - 1) - \nabla\sigma(s) = \sigma^{*} (s) - \sigma(s),\\
&  = \sigma^{*} (s) - [\sigma^{*} (s - 1) + \tau^{*} (s - 1)\nabla x_{ - 1}
(s)]\\
&  = \nabla\sigma^{*} (s) - \tau^{*} (s - 1)\nabla x_{ - 1} (s),
\end{split}
\label{tau2}%
\end{align}
therefore, from (\ref{lambdastar} and (\ref{tau2}), one has
\[
\lambda= \lambda^{*} + \Delta_{ - 1} [\frac{{\tau(s - 1)\nabla x_{ - 1} (s) -
\nabla\sigma(s)}}{{\nabla x(s)}}] = \lambda^{*} - \Delta_{ - 1} [\frac
{{\tau^{*} (s - 1)\nabla x_{ - 1} (s) - \nabla\sigma^{*} (s)}}{{\nabla x(s)}%
}].
\]

\end{proof}

In the same way as Corollary \ref{corollary2}, we have

\begin{corollary}
\label{corollary3} For (\ref{tau})and (\ref{lambda}),we have
\begin{align}
\tau(s) = - \tau^{*} _{ - 2} (s + 1),
\end{align}
and
\begin{align}
\lambda=\lambda^{*} - \kappa^{*} _{ - 1} .
\end{align}

\end{corollary}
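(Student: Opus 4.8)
The statement is precisely the dual of Corollary \ref{corollary2}, and Proposition \ref{pro33} is exactly what makes the duality work: it shows that the original triple $(\sigma,\tau,\lambda)$ is recovered from the adjoint triple $(\sigma^{*},\tau^{*},\lambda^{*})$ through the very same relations (\ref{sigmastar})--(\ref{lambdastar}) that produced $(\sigma^{*},\tau^{*},\lambda^{*})$ from $(\sigma,\tau,\lambda)$. In other words, passing to the adjoint is an involution at the level of coefficients. The plan is therefore to run the argument of Corollary \ref{corollary2} verbatim, interchanging the starred and unstarred quantities throughout.

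First I would record the one structural fact that licenses this interchange: by Corollary \ref{corollary2} the function $\tau^{*}(s)$ is a polynomial of degree one in $x_{0}(s)$, and, via (\ref{sigmastar}), $\sigma^{*}(s)$ is a polynomial of degree at most two. Hence the adjoint data $(\sigma^{*},\tau^{*})$ again satisfy the hypotheses of Lemma \ref{lemma3} and Proposition \ref{expresstau} on the same lattice, so the functions $\tau^{*}_{\nu}(s)$ and the constant $\kappa^{*}_{-1}$ built from the adjoint coefficients are well defined and enjoy the explicit polynomial forms of Proposition \ref{expresstau}.

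For the first identity I would start from (\ref{tau}) written in the form
\[
\tau(s)\,\Delta x_{-1}(s) = \sigma^{*}(s+1) - \sigma^{*}(s-1) - \tau^{*}(s-1)\nabla x_{-1}(s),
\]
which is (\ref{taustar2}) with the roles of starred and unstarred coefficients reversed. Using $\nabla x_{-1}(s)=\nabla x_{1}(s-1)$, substituting $z=s+1$, and applying the defining relation (\ref{taunu}) of Lemma \ref{lemma3} to the adjoint pair $(\sigma^{*},\tau^{*})$, the right-hand side collapses to $-\tau^{*}_{-2}(z)\nabla x_{-1}(z)=-\tau^{*}_{-2}(s+1)\Delta x_{-1}(s)$, giving $\tau(s)=-\tau^{*}_{-2}(s+1)$. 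For the second identity I would feed the middle line of (\ref{tau2}) into (\ref{lambda}), so that the bracketed difference quotient becomes $\bigl(\tau^{*}(s-1)\nabla x_{-1}(s)-\nabla\sigma^{*}(s)\bigr)/\nabla x(s)$, which is exactly $\tau^{*}_{-1}(s)$ after recognizing it through the starred (\ref{taunu}); then $\lambda=\lambda^{*}-\Delta_{-1}\tau^{*}_{-1}(s)$, and Proposition \ref{expresstau} applied to the adjoint data evaluates $\Delta_{-1}\tau^{*}_{-1}(s)=\kappa^{*}_{-1}$ (since $\Delta_{-1}x_{-1}(s)=1$), yielding $\lambda=\lambda^{*}-\kappa^{*}_{-1}$.

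The computations themselves are routine once this dictionary is set up; the only point requiring genuine attention is the second paragraph, namely confirming that the adjoint coefficients really do fall within the scope of Lemma \ref{lemma3} and Proposition \ref{expresstau}, since those results were originally stated for $(\sigma,\tau)$. This is guaranteed by the explicit degree computation in Corollary \ref{corollary2}, and it is the step I would be most careful to state clearly, because without it the symbols $\tau^{*}_{-2}$ and $\kappa^{*}_{-1}$ would not be meaningful.
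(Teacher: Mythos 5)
Your overall route is the same as the paper's: the paper proves this corollary precisely by remarking that, thanks to the dual relations (\ref{sigma})--(\ref{lambda}) of Proposition \ref{pro33}, the argument of Corollary \ref{corollary2} can be repeated with starred and unstarred quantities interchanged, and your third paragraph carries out exactly those computations correctly (the rearranged (\ref{tau}), the shift $z=s+1$, the starred version of (\ref{taunu}) with $\nu=-2$; then for the second identity the starred (\ref{taunu}) with $\nu=-1$ followed by $\Delta_{-1}\tau^{*}_{-1}(s)=\kappa^{*}_{-1}$).

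However, the ``structural fact'' you single out as the crux is misstated. It is not true that, via (\ref{sigmastar}), $\sigma^{*}(s)$ is a polynomial of degree at most two in $x(s)$: on the nonuniform lattices (\ref{lattice1}), (\ref{lattice2}) neither $\sigma(s)$ nor $\sigma^{*}(s)$ is a polynomial in $x(s)$, since $\sigma(s)=\tilde\sigma[x(s)]-\frac{1}{2}\tilde\tau[x(s)]\nabla x_{1}(s)$ and $\nabla x_{1}(s)$ is not a polynomial in $x(s)$. What Lemma \ref{lemma3} and Proposition \ref{expresstau} require of the adjoint pair is polynomiality of $\tilde\sigma^{*}(s)=\sigma^{*}(s)+\frac{1}{2}\tau^{*}(s)\Delta x_{-1}(s)$ (together with that of $\tau^{*}$, which Corollary \ref{corollary2} does give). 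That fact is true, but it needs the separate computation $\tilde\sigma^{*}(s)=\sigma(s+1)+\frac{1}{2}\tau_{-2}(s+1)\Delta x_{-1}(s)=\tilde\sigma_{-2}(s+1)$, obtained from (\ref{equ3}), Corollary \ref{corollary2} and (\ref{sigmanu}); this is exactly what the paper verifies just before Theorem \ref{theorem1}, and it is what makes $\kappa^{*}_{-1}$ (equivalently, the constancy of $\Delta_{-1}\tau^{*}_{-1}(s)$) meaningful in your second identity. Note also that your first identity needs none of this: there $\tau^{*}_{-2}$ can be read purely as the definitional right-hand side of the starred (\ref{taunu}), in the spirit of the Remark following (\ref{recurrence2}). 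With that one justification repaired, your proof is complete and coincides with the paper's intended argument.
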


\begin{proposition}
The adjoint equation (\ref{adjoint equation}) can be rewritten as
\begin{align}
\sigma(s + 1)\Delta_{ - 1} \nabla_{0} w(s) - \tau_{ - 2} (s + 1)\nabla_{0}
w(s) + (\lambda- \kappa_{ - 1} ) w(s) = 0. \label{adjoint2}%
\end{align}

\end{proposition}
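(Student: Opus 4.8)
The plan is to start from the adjoint equation (\ref{adjoint equation}) in its original form $\sigma^{*}(s)\Delta_{-1}\nabla_{0}w(s)+\tau^{*}(s)\Delta_{0}w(s)+\lambda^{*}w(s)=0$ and to eliminate the forward operator $\Delta_{0}$, trading it for $\nabla_{0}$ and the mixed second-order operator $\Delta_{-1}\nabla_{0}$ that already appears. The single computational identity I need is the relation between $\Delta_{0}w(s)$ and $\nabla_{0}w(s)$. Directly from the definitions in (\ref{difference}) one checks that $\Delta_{0}w(s)=\nabla_{0}w(s+1)$, so writing $v(s)=\nabla_{0}w(s)$ and using $\Delta_{-1}v(s)=[v(s+1)-v(s)]/\Delta x_{-1}(s)$ yields $\Delta_{0}w(s)=v(s)+\Delta x_{-1}(s)\,\Delta_{-1}v(s)=\nabla_{0}w(s)+\Delta x_{-1}(s)\,\Delta_{-1}\nabla_{0}w(s)$.

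Next I would substitute this expression for $\Delta_{0}w(s)$ back into (\ref{adjoint equation}) and collect terms by order. The coefficient of $\Delta_{-1}\nabla_{0}w(s)$ becomes $\sigma^{*}(s)+\tau^{*}(s)\Delta x_{-1}(s)$, the coefficient of $\nabla_{0}w(s)$ becomes exactly $\tau^{*}(s)$, and the term $\lambda^{*}w(s)$ is left untouched. This is a purely mechanical regrouping, with no cancellation or polynomial manipulation required at this stage.

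To finish, I would identify each of these three coefficients with the quantities appearing in (\ref{adjoint2}). For the leading coefficient, the dual relation $\sigma(s+1)=\sigma^{*}(s)+\tau^{*}(s)\Delta x_{-1}(s)$ was already derived in the proof of Proposition \ref{pro33} by combining (\ref{sigmastar}) with (\ref{taustar2}), so the recombined coefficient is exactly $\sigma(s+1)$. For the first-order coefficient, Corollary \ref{corollary2} supplies $\tau^{*}(s)=-\tau_{-2}(s+1)$. For the zeroth-order coefficient, the same corollary gives $\lambda^{*}=\lambda-\kappa_{-1}$. Substituting these three facts produces precisely (\ref{adjoint2}).

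The argument is essentially routine once the operator identity $\Delta_{0}w(s)=\nabla_{0}w(s)+\Delta x_{-1}(s)\,\Delta_{-1}\nabla_{0}w(s)$ is in hand, since everything afterward is handed to us by Corollary \ref{corollary2} and by an equality already established inside the proof of Proposition \ref{pro33}. I therefore do not anticipate a genuine obstacle; the only point requiring attention is the passage from the forward difference $\Delta_{0}$ to the backward difference $\nabla_{0}$, and recognizing that the previously proved dual relation is exactly what converts the recombined leading coefficient into $\sigma(s+1)$.
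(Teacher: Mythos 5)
Your proposal is correct and follows essentially the same route as the paper: the paper also substitutes the identity $\Delta_{0}w(s)=\nabla_{0}w(s)+\Delta x_{-1}(s)\,\Delta_{-1}\nabla_{0}w(s)$ into (\ref{adjoint equation}), identifies the leading coefficient $\sigma^{*}(s)+\tau^{*}(s)\Delta x_{-1}(s)$ with $\sigma(s+1)$ via the dual relation (\ref{sigma}), and then invokes Corollary \ref{corollary2} for $\tau^{*}(s)=-\tau_{-2}(s+1)$ and $\lambda^{*}=\lambda-\kappa_{-1}$. The only cosmetic difference is that you derive the operator identity from the shift relation $\Delta_{0}w(s)=\nabla_{0}w(s+1)$, whereas the paper writes it as $\Delta_{0}w(s)-\nabla_{0}w(s)=\Delta\bigl(\nabla_{0}w(s)\bigr)$; these are the same fact.
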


\begin{proof}
Since%
\[
\Delta_{0} w(s) - \nabla_{0} w(s) = \Delta(\frac{{\nabla w(s)}}{{\nabla x(s)}%
}),
\]
we have
\[
\tau^{*} (s)\Delta_{0} w(s) = \tau^{*} (s)\nabla_{0} w(s) + \tau^{*}
(s)\Delta(\frac{{\nabla w(s)}}{{\nabla x(s)}})
\]
\begin{align}
\label{equ}= \tau^{*} (s)\nabla_{0} w(s) + \tau^{*} (s)\Delta x_{ - 1}
(s)\frac{\Delta}{{\Delta x_{ - 1} (s)}}(\frac{{\nabla w(s)}}{{\nabla x(s)}}),
\end{align}
substituting (\ref{equ}) into (\ref{adjoint equation}), we obtain
\begin{align}
\label{equ2}[\sigma^{*} (s) + \tau^{*} (s)\Delta x_{ - 1} (s)]\frac{\Delta
}{{\Delta x_{ - 1} (s)}}(\frac{{\nabla w(s)}}{{\nabla x(s)}}) + \tau^{*}
(s)\nabla_{0} w(s) + \lambda^{*} w(s) = 0.
\end{align}
from (\ref{sigma}), one has
\begin{align}
\label{equ3}\sigma^{*} (s) + \tau^{*} (s)\Delta x_{ - 1} (s) = \sigma(s + 1).
\end{align}
Substituting (\ref{equ3}) into (\ref{equ2}), and by (\ref{tau*}), we obtain
\[
\sigma(s + 1)\Delta_{ - 1} \nabla_{0} w(s) - \tau_{ - 2} (s + 1)\nabla_{0}
w(s) + (\lambda- \kappa_{ - 1}) w(s) = 0.
\]

\end{proof}

Next we will prove that the adjoint equation (\ref{adjoint equation}) or
(\ref{adjoint2}) is also hypergeometric type difference equations on
nonuniform lattices. It only needs to prove
\[
\tilde\sigma^{*} (s) = \sigma^{*} (s) + \frac{1}{2}\tau^{*} (s)\Delta x_{ - 1}
(s) = \sigma(s + 1) +\frac{1}{2}\tau_{ - 2} (s + 1)\Delta x_{ - 1} (s)
\]
is polynomial of degree at most two in the variable $x_{0}(s)$.

In fact, from Lemma \ref{lemma3} and (\ref{sigmanu}), one has
\[
\tilde\sigma^{*} (s) = \sigma(s + 1) + \frac{1}{2}\tau_{ - 2} (s + 1)\nabla
x_{ - 1} (s + 1) = \tilde\sigma_{ - 2} (s + 1)
\]
is polynomial of degree at most two in the variable $x_{-2}(s+1)=x_{0}(s)$.

Therefore, we have

\begin{theorem}
\label{theorem1} The adjoint equation (\ref{adjoint2}) or
\begin{align}
\label{theo}\tilde\sigma_{ - 2} (s + 1)\Delta_{ - 1} \nabla_{0} w(s) -
\frac{1}{2}\tau_{ - 2} (s + 1)[\Delta_{0} w(s) + \nabla_{0} w(s)] + (\lambda-
\kappa_{ - 1} ) w(s) = 0
\end{align}
is also hypergeometric type difference equations on nonuniform lattices.
\end{theorem}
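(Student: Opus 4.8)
The plan is to check that (\ref{theo}) already sits in the template (\ref{operator}) of a hypergeometric-type difference equation on a nonuniform lattice, which requires exactly three things: the coefficient of $\Delta_{-1}\nabla_0 w(s)$ must be a polynomial of degree at most two in the lattice variable $x_0(s)=x(s)$; the coefficient of the symmetric combination $\tfrac{1}{2}[\Delta_0 w(s)+\nabla_0 w(s)]$ must be a polynomial of degree at most one in $x_0(s)$; and the zeroth-order coefficient must be independent of $s$. The last requirement is immediate, since $\lambda-\kappa_{-1}$ is constant. Reading off (\ref{theo}), the leading coefficient is $\tilde\sigma_{-2}(s+1)$ and the symmetric coefficient is $-\tfrac12\tau_{-2}(s+1)$, so that the role of $\tilde\tau$ in (\ref{operator}) is played by $-\tau_{-2}(s+1)$; everything therefore reduces to controlling the $x_0(s)$-degrees of these two functions.

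For the degrees I would invoke Suslov's Lemma \ref{lemma3} at the parameter value $\nu=-2$: it guarantees that $\tilde\sigma_{-2}(\cdot)$ and $\tau_{-2}(\cdot)$ are polynomials of degree at most two and one respectively in the variable $x_{-2}(\cdot)=x(\cdot-1)$. Evaluating at the shifted argument $s+1$ and using the index identity $x_{-2}(s+1)=x((s+1)-1)=x(s)=x_0(s)$, I conclude at once that $\tilde\sigma_{-2}(s+1)$ is of degree at most two and $-\tau_{-2}(s+1)$ of degree at most one in $x_0(s)$. For the second of these one may instead simply quote the explicit linear expression for $\tau^*(s)=-\tau_{-2}(s+1)$ already recorded in Corollary \ref{corollary2}. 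With $\tilde\sigma[x(s)]:=\tilde\sigma_{-2}(s+1)$, $\tilde\tau[x(s)]:=-\tau_{-2}(s+1)$ and the constant $\lambda-\kappa_{-1}$ in place of $\lambda$, equation (\ref{theo}) is then literally an instance of (\ref{operator}) over the same lattice $x(s)$ of type (\ref{lattice1}) or (\ref{lattice2}), which is the assertion.

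To justify the ``or'' in the statement I would also confirm that (\ref{theo}) and (\ref{adjoint2}) coincide. Starting from the identity used in Section 3, $\Delta_0 w(s)-\nabla_0 w(s)=\Delta[\nabla_0 w(s)]=\Delta x_{-1}(s)\,\Delta_{-1}\nabla_0 w(s)$, one writes $\tfrac12[\Delta_0 w+\nabla_0 w]=\nabla_0 w+\tfrac12\Delta x_{-1}(s)\,\Delta_{-1}\nabla_0 w$; substituting this into (\ref{theo}) absorbs the extra piece into the second-order term, turning its coefficient into $\tilde\sigma_{-2}(s+1)-\tfrac12\tau_{-2}(s+1)\Delta x_{-1}(s)=\sigma(s+1)$ by the relation $\tilde\sigma^*(s)=\sigma(s+1)+\tfrac12\tau_{-2}(s+1)\Delta x_{-1}(s)=\tilde\sigma_{-2}(s+1)$ noted just before the theorem, and reproduces (\ref{adjoint2}) exactly.

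I expect the only delicate point to be the half-integer index bookkeeping, namely verifying that the pair $(\nu,\text{argument})=(-2,s+1)$ returns precisely the base lattice $x_0(s)$ and not a neighbouring one; once this matching is pinned down, both degree claims follow immediately from Lemma \ref{lemma3}, and no genuinely new computation is needed beyond the symmetrization identity.
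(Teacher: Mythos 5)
Your proof is correct and takes essentially the same route as the paper: both arguments reduce the claim to the degree of the coefficients, identify the leading coefficient as $\tilde\sigma_{-2}(s+1)$ and the linear one as $-\tau_{-2}(s+1)=\tau^{*}(s)$, and then invoke Suslov's Lemma \ref{lemma3} at $\nu=-2$ together with the index identity $x_{-2}(s+1)=x_{0}(s)$ (the paper phrasing this as $\tilde\sigma^{*}(s)=\sigma(s+1)+\tfrac12\tau_{-2}(s+1)\nabla x_{-1}(s+1)=\tilde\sigma_{-2}(s+1)$). Your additional verification that (\ref{theo}) and (\ref{adjoint2}) coincide via the symmetrization identity is exactly the computation the paper performs in the proposition establishing (\ref{adjoint2}), run in the opposite direction.
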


\section{An extension of Rodrigues formula}

Let
\[
Y_{n}(s)=\rho_{n}(s-n)=\rho(s)\prod_{j=0}^{n-1}\sigma(s-i).
\]
We now construct a difference equation of the form (\ref{operator0}) which
satisfied by $Y_{n}(s)$. Rewrite $Y_{n}(s)=\rho(s)\sigma(s)\prod_{j=1}%
^{n-1}\sigma(s-i)$. Then using Proposition \ref{product} and the Pearson type
equation (\ref{pearson1}), we have
\begin{align*}%
\begin{split}
\nabla_{-n}Y_{n}(s)  &  =\frac{\nabla Y_{n}(s)}{\nabla x_{-n}(s)}\\
&  =\frac{1}{\nabla x_{-n}(s)}\left[  \rho(s)\sigma(s)\prod_{j=1}^{n-1}%
\sigma(s-i)-\rho(s-1)\sigma(s-1)\prod_{i=1}^{n-1}\sigma(s-1-i)\right] \\
&  =\frac{1}{\nabla x_{-n}(s)}\Big\{\lbrack\tau(s-1)\rho(s-1)\nabla
x_{-1}(s)+\sigma(s-1)\rho(s-1)]\prod_{i=1}^{n-1}\sigma(s-i)\\
&  -\rho(s-1)\sigma(s-1)\prod_{i=1}^{n-1}\sigma(s-1-i)\Big\}.
\end{split}
\end{align*}
Then we have a difference equation satisfied by $Y_{n}(s)$:
\begin{align}
\label{Yn1}\sigma(s-n)\nabla_{-n}Y_{n}(s)=\left(  \frac{\sigma(s-1)-\sigma
(s-n)}{\nabla x_{-n}(s)}+\tau(s-1)\frac{\nabla x_{-1}(s)}{\nabla x_{-n}%
(s)}\right)  Y_{n}(s-1).
\end{align}

\begin{proposition}
\label{the other} If $u_{1}(s)$ is a nontrivial solution of the difference
equation
\begin{align}
\label{first}p_{1}(s)\nabla_{k} u(s)=p_{0}(s)u(s-1),
\end{align}
with $p_{1}(s)\neq0$, then it satisfies a difference equation
\begin{align}
\label{second}%
\begin{split}
&  \left(  p_{1}(s)-p_{0}(s)\nabla x_{k}(s)\right)  \Delta_{k-1}\nabla_{k}
u(s)\\
&  +\left(  \Delta_{k-1}p_{1}(s)-p_{0}(s)\frac{\nabla x_{k}(s)}{\Delta
x_{k-1}(s)}\right)  \Delta_{k}u(s)-\Delta_{k-1}p_{0}(s)u(s)=0
\end{split}
\end{align}
Furthermore, the other solution of (\ref{second}) is
\begin{align*}
u_{2}(s)=Cu_{1}(s)\int_{N}^{s} \frac{1}{p_{1}(t)u_{1}(t)}d_{\nabla}x_{k}(t),
\end{align*}
where $C$ is a constant.
\end{proposition}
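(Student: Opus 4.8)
The plan is to exhibit the second-order equation (\ref{second}) as the image of the first-order equation (\ref{first}) under the difference operator $\Delta_{k-1}$, which disposes of both assertions at once. Introduce the first-order operator $M[u](s):=p_1(s)\nabla_k u(s)-p_0(s)u(s-1)$, so that (\ref{first}) reads $M[u]=0$, and write $L_2$ for the second-order operator appearing on the left of (\ref{second}). The key structural claim is the \emph{operator identity} $L_2=\Delta_{k-1}\circ M$, valid for every $u$; granting this, $u_1$ solves (\ref{second}) immediately, since $M[u_1]=0$ forces $L_2[u_1]=\Delta_{k-1}(M[u_1])=0$.

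To verify the identity I would apply $\Delta_{k-1}$ to each term of $M[u]$ using the product rules of Proposition \ref{product}, writing $\Delta_{k-1}(p_1\nabla_k u)=p_1\,\Delta_{k-1}\nabla_k u+(\nabla_k u)(s+1)\,\Delta_{k-1}p_1$ and $\Delta_{k-1}(p_0\,u(s-1))=p_0\,\Delta_{k-1}[u(s-1)]+u\,\Delta_{k-1}p_0$. I then invoke two elementary consequences of the definitions (\ref{difference}): first, $\nabla_k u(s+1)=\Delta_k u(s)$, which turns the shifted first term into a $\Delta_k u$ contribution; and second, $\Delta_{k-1}[u(s-1)](s)=\tfrac{\nabla x_k(s)}{\Delta x_{k-1}(s)}\nabla_k u(s)$. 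Finally, the relation $\Delta_{k-1}\nabla_k u=\tfrac{\Delta_k u-\nabla_k u}{\Delta x_{k-1}}$ lets me trade the remaining $\nabla_k u$ for $\Delta_k u$ together with a multiple of the second difference $\Delta_{k-1}\nabla_k u$; collecting the coefficients of $\Delta_{k-1}\nabla_k u$, $\Delta_k u$ and $u$ produces exactly (\ref{second}).

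For the second solution, set $g(s)=\int_N^s\frac{d_\nabla x_k(t)}{p_1(t)u_1(t)}$, so that $\nabla_k g(s)=\frac{1}{p_1(s)u_1(s)}$ by Proposition \ref{integration}(1), and put $u_2=Cu_1 g$. Because of the factorization $L_2=\Delta_{k-1}\circ M$, it suffices to show that $M[u_2]$ is constant. Computing $\nabla_k u_2$ by the $\nabla_k$-product rule gives $\nabla_k u_2=C[\,u_1(s-1)\nabla_k g+g\,\nabla_k u_1\,]$; multiplying by $p_1$, using $\nabla_k g=1/(p_1u_1)$ and the equation $p_1\nabla_k u_1=p_0\,u_1(s-1)$ for $u_1$, the $g$-dependent pieces of $M[u_2]=p_1\nabla_k u_2-p_0\,u_2(s-1)$ combine through $\nabla g=\nabla x_k\,\nabla_k g$ and collapse, leaving $M[u_2]\equiv C$. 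Hence $L_2[u_2]=\Delta_{k-1}(C)=0$. Independence of $u_2$ from $u_1$ is clear since $u_2/u_1=Cg$ is non-constant ($\nabla_k g\neq0$), so $\{u_1,u_2\}$ is a fundamental system for the two-dimensional solution space of (\ref{second}).

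The main obstacle is purely bookkeeping: keeping the half-integer lattice shifts straight when passing between $\Delta_{k-1}$, $\nabla_k$ and $\Delta_k$, and in particular correctly assembling the coefficient of the top-order term $\Delta_{k-1}\nabla_k u$, which is where the conversion $\nabla_k u=\Delta_k u-\Delta x_{k-1}\Delta_{k-1}\nabla_k u$ feeds back into that coefficient. Once the operator identity $L_2=\Delta_{k-1}\circ M$ is in hand, both halves of the proposition follow in one line each.
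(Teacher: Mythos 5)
Your derivation of (\ref{second}) is essentially the paper's own proof: the paper likewise applies $\Delta_{k-1}$ to (\ref{first}) using Proposition \ref{product}, invokes $\nabla_k u(s+1)=\Delta_k u(s)$ and $\Delta_{k-1}[u(s-1)]=\frac{\nabla x_k(s)}{\Delta x_{k-1}(s)}\nabla_k u(s)$, and eliminates $\nabla_k u$ via $\nabla_k u(s)=\Delta_k u(s)-\Delta x_{k-1}(s)\,\Delta_{k-1}\nabla_k u(s)$; your operator identity $L_2=\Delta_{k-1}\circ M$ is exactly what the paper establishes implicitly. For the second solution the two arguments run in opposite directions around the same first integral $M[u]=\mathrm{const}$: the paper takes an arbitrary solution $u_2$ of (\ref{second}), deduces $p_1(s)\nabla_k u_2(s)-p_0(s)u_2(s-1)=C$ from the factorization, then computes $\nabla_k(u_2/u_1)=C/(p_1u_1)$ by the quotient rule and integrates, which \emph{derives} the formula and shows every solution has this form up to adding a multiple of $u_1$; you instead \emph{define} $u_2=Cu_1g$ and verify $M[u_2]\equiv C$ by the product rule, then append an independence remark. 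Your direction is a legitimate, slightly cleaner verification, but by itself it only exhibits one further solution; the completeness of the resulting two-parameter family rests on your appeal to two-dimensionality of the solution space, which the paper's derivation gets for free.

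One concrete discrepancy you should flag rather than pass over: the computation you outline (and the paper's) yields the leading coefficient $p_1(s)+p_0(s)\nabla x_k(s)$, not $p_1(s)-p_0(s)\nabla x_k(s)$ as printed in (\ref{second}), so collecting coefficients does not produce ``exactly (\ref{second})''. This is a sign typo in the statement, not an error in your argument: the paper's subsequent application of the proposition, equation (\ref{hatsigma}), reads $\widehat{\sigma}(s)=p_1(s)+p_0(s)\nabla x_{-n}(s)=\sigma(s-1)+\tau(s-1)\nabla x_{-1}(s)$, which requires the plus sign. Your own second half is consistent only with the plus sign as well, since the collapse to $M[u_2]\equiv C$ uses $p_1(s)u_1(s)=u_1(s-1)\left(p_1(s)+p_0(s)\nabla x_k(s)\right)$, which is a rewriting of (\ref{first}).
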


\begin{proof}
Applying the operator $\Delta_{k-1}$ to both sides of (\ref{first}), we have
\begin{align}
\label{ukk}p_{1}(s)\Delta_{k-1}\nabla_{k} u(s)+\Delta_{k-1}p_{1}(s)\Delta_{k}
u(s)=u(s)\Delta_{k-1}p_{0}(s)+p_{0}(s)\Delta_{k-1}u(s-1).
\end{align}
Since
\[
\Delta_{k-1}\nabla_{k} u(s)=\frac{1}{\Delta x_{k-1}(s)}\left(  \Delta
_{k}u(s)-\frac{\nabla u(s)}{\nabla x_{k}(s)}\right)  ,
\]
we have
\begin{align}
\label{uk-1}\Delta_{k-1}u(s-1)=\frac{\nabla x_{k}(s)}{\Delta x_{k-1}%
(s)}\left(  \Delta_{k}u(s)-\Delta_{k-1}\nabla_{k} u(s)\Delta x_{k-1}%
(s)\right)  .
\end{align}
Substituting (\ref{uk-1}) into (\ref{ukk}), we obtain the equation
(\ref{second}) about $u(s)$. Denote the other solution of (\ref{second}) as
$u_{2}(s)$, then
\begin{align}
\label{third}\nabla_{k}\left(  \frac{u_{2}(s)}{u_{1}(s)}\right)  =\frac
{u_{1}(s-1)\nabla_{k} u_{2}(s)-u_{2}(s-1)\nabla_{k} u_{1}(s)}{u_{1}%
(s)u_{1}(s-1)}.
\end{align}
By the above deduction, $u_{2}(s)$ satisfies
\[
\Delta_{k-1}[p_{1}(s)\nabla_{k} u(s)-p_{0}(s)u(s-1)]=0.
\]
Thus for any constant $C$,
\[
p_{1}(s)\nabla_{k}u_{2}(s)-p_{0}(s)u_{2}(s-1)=C.
\]
Then, substituting (\ref{first}) into (\ref{third}), we have
\begin{align*}
\nabla_{k}\left(  \frac{u_{2}(s)}{u_{1}(s)}\right)  =\frac{C}{p_{1}%
(s)u_{1}(s)}.
\end{align*}
By Proposition \ref{integration},
\begin{align*}
u_{2}(s)=Cu_{1}(s)\int_{N}^{s}\frac{1}{p_{1}(t)u_{1}(t)}d_{\nabla}x_{k}(t).
\end{align*}

\end{proof}

By the above proposition, we obtain%
\begin{align}
\label{Yns}\widehat{\sigma}(s)\Delta_{-(n+1)}\nabla_{-n} Y_{n}%
(s)+\widehat{\tau}(s)\Delta_{-n}Y_{n}(s)+\widehat{\lambda}Y_{n}(s)=0,
\end{align}
where
\begin{align}
&  \widehat{\sigma}(s)=\sigma(s-1)+\tau(s-1)\nabla x_{-1}(s)=\sigma
^{*}(s),\label{hatsigma}\\
&  \widehat{\tau}(s)=\frac{\sigma(s-n+1)-\sigma(s-1)}{\Delta x_{-(n+1)}%
(s)}-\tau(s-1)\frac{\nabla x_{-1}(s)}{\Delta x_{-(n+1)}(s)}= - \tau_{ - (n +
2)} (s + 1),\label{hattau}\\
&  \widehat{\lambda}=-\Delta_{-(n+1)}\left(  \frac{\sigma(s-1)-\sigma
(s-n)}{\nabla x_{-n}(s)}+\tau(s-1)\frac{\nabla x_{-1}(s)}{\nabla x_{-n}%
(s)}\right)  = - \Delta_{ - (n + 1)} \tau_{ - (n + 1)} (s). \label{hatlambda}%
\end{align}

By the same way as section 5, the equation (\ref{Yns}) can be also rewritten
as
\begin{align}
\sigma(s + 1)\Delta_{ - (n + 1)} \nabla_{ - n} Y_{n} (s) - \tau_{ - (n + 2)}
(s + 1)\nabla_{ - n} Y_{n} (s) +\widehat{\lambda} Y_{n} (s) = 0.
\label{equation61}%
\end{align}

Denote the other solution of (\ref{Yns}) as $\widehat{Y}_{n}(s)$. Then
\begin{align}
\label{hatYn}\widehat{Y}_{n}(s)=\rho(s)\prod_{j=0}^{n-1}\sigma(s-j)\int%
_{N}^{s}\frac{1}{\rho(t)\prod_{j=0}^{n}\sigma(t-j)}d_{\nabla}x_{-n}(t).
\end{align}

Now, for the equation (\ref{Yns}), let $Y_{n}^{(n)}(s)=\Delta_{-n}^{(n)}%
Y_{n}(s)$. Then $Y_{n}^{(n)}(s)$ satisfies
\begin{align}
\label{Ynsn}\widehat{\sigma}(s)\Delta_{-1}\nabla_{0}Y_{n}^{(n)}%
(s)+\widehat{\tau}_{n}(s)\Delta_{0}Y_{n}^{(n)}(s)+\widehat{\mu}_{n}Y_{n}%
^{(n)}(s)=0.
\end{align}
By the recurrence relation (\ref{recurrence1}) and (\ref{recurrence2}), for
any nonnegative integer $k$, we have
\begin{align}
\label{taukk}%
\begin{split}
\widehat{\tau}_{k}(s)  &  =\frac{\widehat{\sigma}(s+k)-\widehat{\sigma
}(s)+\widehat{\tau}(s+k)\Delta x_{-n}(s+k-\frac{1}{2})}{\Delta x_{-n}%
(s+\frac{k-1}{2})}\\
&  =\frac{\sigma(s-n+k+1)-\sigma(s-1)-\tau(s-1)\nabla x_{-1}(s)}{\Delta
x_{-n}(s+\frac{k-1}{2})}\\
&  = - \tau_{ - (n - k + 2)} (s + 1),
\end{split}
\end{align}
and
\begin{align}
\label{muk}\hat\mu_{n} = \hat\lambda+ \sum\limits_{k = 0}^{n - 1} {\Delta_{k -
n} \hat\tau_{k} (s)}.
\end{align}

When $k$ is a negative integer, we also denote the righthand sides of
(\ref{taukk}) as $\widehat{\tau}_{k}(s)$. When $k=n$, we obtain
\begin{align*}
\widehat{\tau}_{n}(s)=\frac{\sigma(s+1)-\sigma(s-1)}{\Delta x(s-\frac{1}{2}%
)}-\tau(s-1)\frac{\nabla x_{-1}(s)}{\Delta x_{-1}(s)} = - \tau_{ - 2} (s +
1)=\tau^{*}(s).
\end{align*}
By the same way as section 5, the equation (\ref{Ynsn}) can also be rewritten
as
\begin{align}
\label{Ynsn62}\sigma(s + 1)\Delta_{ - 1} \nabla_{0} Y_{n} (s) - \tau_{ - 2} (s
+ 1)\nabla_{0} Y_{n} (s) + \hat\mu_{n} Y_{n} (s) = 0.
\end{align}

For the computation of $\widehat{\mu}_{n}$ we need a result whose proof is
similar to Proposition \ref{expresstau}.

\begin{lemma}
\label{lattice} Given any integer $k$, if $x(s)=c_{1}q^{s}+c_{2}q^{-s}+c_{3}$,
then
\begin{align*}%
\begin{split}
\widehat{\tau}_{k}(s)  &  =\left[  \frac{q^{k-n+2}-q^{n-k-2}}{q^{\frac{1}{2}%
}-q^{-\frac{1}{2}}}\frac{\widetilde{\sigma}^{\prime\prime}}{2}-\left(
q^{k-n+2}+q^{n-k-2}\right)  \frac{\widetilde{\tau}^{\prime}}{2}\right]
x_{k-n}(s)+\widehat{c}_{1}(k);\\
&  = \{ \nu[ - 2(n - k - 2)]\frac{{\tilde\sigma^{\prime\prime}}}{2} - \alpha[
- 2(n - k - 2)]\tau^{\prime}\} x_{n - k} (s) + \hat c_{1} (k)\\
&  = - \kappa_{2(n - k - 2) + 1}x_{k - n} + \hat c_{1} (k);
\end{split}
\end{align*}

if $x(s)=\widetilde{c}_{1}s^{2}+\widetilde{c}_{2}s+\widetilde{c}_{3}$, then
\begin{align*}%
\begin{split}
\widehat{\tau}_{k}(s)  &  =\left[  (k-n+2)\widetilde{\sigma}^{\prime\prime
}-\widetilde{\tau}^{\prime}\right]  x_{k-n}(s)+\widehat{c}_{2}(k)\\
&  = - \kappa_{2(n - k - 2) + 1}x_{k - n} + \hat c_{2} (k);
\end{split}
\end{align*}
where $\widehat{c}_{1}(k),\widehat{c}_{2}(k)$ are functions with respect to
$k$:%

\begin{align*}%
\begin{split}
&  \widehat{c}_{1}(k)=\frac{q^{\frac{k-n+2}{2}}-q^{\frac{n-k-2}{2}}}%
{q^{\frac{1}{2}}-q^{-\frac{1}{2}}}[\widetilde{\sigma}^{\prime}(0)+c_{3}%
(2-q^{\frac{k-n+2}{2}}-q^{\frac{n-k-2}{2}})]\\
&  +\widetilde{\tau}(0)(q^{\frac{k-n+2}{2}}+q^{\frac{n-k-2}{2}})+c_{3}%
\widetilde{\tau}^{\prime\frac{k-n+2}{2}})(q^{\frac{k-n+2}{2}}-q^{n-k-2}),
\end{split}
\\
&  \widehat{c}_{2}(k)=\widetilde{\sigma}'(0)(k-n+2)+\frac{\widetilde{\sigma
}''}{4}\widetilde{c}_{1}(k-n+2)^{3}+\frac{3\widetilde{\tau}'}{4}%
\widetilde{c}_{1}(k-n+2)^{2}+2\widetilde{\tau}(0).
\end{align*}

\end{lemma}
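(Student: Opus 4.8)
The plan is to imitate, almost verbatim, the proof of Proposition~\ref{expresstau}, now applied to the transformed data of the equation (\ref{Ynsn}). The starting point is the recurrence for $\widehat{\tau}_{k}(s)$ recorded in (\ref{taukk}); rewriting it in the $\widetilde{\sigma},\widetilde{\tau}$-symmetric form used at the beginning of the proof of Proposition~\ref{expresstau} and inserting the known coefficients $\widehat{\sigma}(s)=\sigma^{*}(s)$ and $\widehat{\tau}(s)=-\tau_{-(n+2)}(s+1)$ presents $\widehat{\tau}_{k}(s)$ as a single quotient whose numerator is a second difference of $\widetilde{\sigma}$ together with first differences of $\widetilde{\tau}$ taken over a gap of $k-n+2$ lattice steps, divided by the single step $\Delta x_{-n}(s+\tfrac{k-1}{2})$.

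For the lattice (\ref{lattice1}) I would reuse the elementary $q$-identities already derived in the proof of Proposition~\ref{expresstau}, namely $x(s+K)-x(s)=(q^{K/2}-q^{-K/2})(c_{1}q^{s+K/2}-c_{2}q^{-s-K/2})$ and the analogous formula for the single step, now with $K=k-n+2$. The common factor $c_{1}q^{\cdots}-c_{2}q^{\cdots}$ cancels against the denominator, so the second-difference piece of $\widetilde{\sigma}$ contributes the quotient $\tfrac{q^{k-n+2}-q^{n-k-2}}{q^{1/2}-q^{-1/2}}\tfrac{\widetilde{\sigma}''}{2}$ while the $\widetilde{\tau}$-pieces assemble into the factor $-(q^{k-n+2}+q^{n-k-2})\tfrac{\widetilde{\tau}'}{2}$, both multiplying $x_{k-n}(s)=x(s+\tfrac{k-n}{2})$. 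Since $-(n-k-2)=k-n+2$, these are exactly $\nu[-2(n-k-2)]\tfrac{\widetilde{\sigma}''}{2}$ and $-\alpha[-2(n-k-2)]\widetilde{\tau}'$, and recalling $\kappa_{\mu}=\alpha(\mu-1)\widetilde{\tau}'+\nu(\mu-1)\tfrac{\widetilde{\sigma}''}{2}$ together with the parity of $\alpha,\nu$ collapses the leading term to $-\kappa_{2(n-k-2)+1}\,x_{k-n}(s)$. As a check, at $k=n$ the coefficient becomes $\nu(4)\tfrac{\widetilde{\sigma}''}{2}-\alpha(4)\widetilde{\tau}'$, agreeing with $\widehat{\tau}_{n}(s)=\tau^{*}(s)$ from Corollary~\ref{corollary2}.

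The quadratic lattice (\ref{lattice2}) is entirely parallel: there the $q$-quotients degenerate to $\nu(\mu)=\mu$ and $\alpha(\mu)=1$, so the second difference of $\widetilde{\sigma}$ produces the factor $(k-n+2)\widetilde{\sigma}''$ and the $\widetilde{\tau}$-pieces produce $-\widetilde{\tau}'$, giving the linear term $[(k-n+2)\widetilde{\sigma}''-\widetilde{\tau}']x_{k-n}(s)=-\kappa_{2(n-k-2)+1}x_{k-n}(s)$ with the same variable.

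The only genuine obstacle is the bookkeeping of the constants $\widehat{c}_{1}(k)$ and $\widehat{c}_{2}(k)$. Once the leading term is split off, the remaining $s$-independent part of the numerator must be expanded and regrouped: the $\widetilde{\sigma}'(0)$-, $\widetilde{\tau}(0)$- and $c_{3}$-dependent pieces (respectively the $\widetilde{\sigma}'(0)$-, $\widetilde{\tau}(0)$- and $\widetilde{c}_{1}$-dependent pieces) must be collected into the closed forms claimed. This is precisely the constant-term computation that already dominates the proof of Proposition~\ref{expresstau}, merely carried out at the shifted gap $k-n+2$ and base point $s-1$, so it needs only careful algebra and no new idea. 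A shortcut for the leading coefficient is to invoke instead the identity of (\ref{taukk}) expressing $\widehat{\tau}_{k}(s)$ as a reflected $\tau$ evaluated at $s+1$, combined with Proposition~\ref{expresstau} and the shift $x_{m}(s+1)=x_{m+2}(s)$ and the oddness of $\nu$ and evenness of $\alpha$; the constants must still be tracked by hand, so the overall effort is unchanged.
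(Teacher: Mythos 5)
Your proposal takes essentially the same approach as the paper: the paper gives no separate proof of this lemma, stating only that its proof is ``similar to Proposition \ref{expresstau}'', which is precisely your plan of repeating that computation with the gap $k-n+2$ and collecting the constant terms. Your suggested shortcut --- combining the identity $\widehat{\tau}_{k}(s)=-\tau_{-(n-k+2)}(s+1)$ recorded in (\ref{taukk}) with Proposition \ref{expresstau}, the shift $x_{m}(s+1)=x_{m+2}(s)$, and the parity of $\nu$ and $\alpha$ --- is exactly the route the paper itself sets up, so the two arguments coincide.
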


\begin{corollary}
\label{corollary6} If $x(s)=c_{1}q^{s}+c_{2}q^{-s}+c_{3}$, or
$x(s)=\widetilde{c}_{1}s^{2}+\widetilde{c}_{2}s+\widetilde{c}_{3}$, then
\begin{align*}
\widehat{\mu}_{n}=- \kappa_{ - 1} - \kappa_{n} \nu(n)=- \kappa_{n-1} \nu(n+1).
\end{align*}

\end{corollary}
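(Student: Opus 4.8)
The plan is to evaluate $\widehat{\mu}_n$ directly from the recurrence (\ref{muk}), $\widehat{\mu}_n=\widehat{\lambda}+\sum_{k=0}^{n-1}\Delta_{k-n}\widehat{\tau}_k(s)$, computing the sum and the constant $\widehat{\lambda}$ separately and then telescoping the resulting $\kappa$'s.

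First I would handle the sum. By Lemma \ref{lattice}, for every $k$ the function $\widehat{\tau}_k(s)$ is a polynomial of degree one in $x_{k-n}(s)$ with leading coefficient $-\kappa_{2(n-k-2)+1}=-\kappa_{2n-2k-3}$. Since $\Delta_{k-n}x_{k-n}(s)=1$ and the additive constant $\widehat{c}_i(k)$ is killed by $\Delta_{k-n}$, we get $\Delta_{k-n}\widehat{\tau}_k(s)=-\kappa_{2n-2k-3}$. Re-indexing by $i=n-1-k$ then gives $\sum_{k=0}^{n-1}\Delta_{k-n}\widehat{\tau}_k(s)=-\sum_{i=0}^{n-1}\kappa_{2i-1}$.

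The delicate step is to pin down $\widehat{\lambda}$, and here the index on $\tau_\nu$ must be tracked with care. Instead of reading the leading coefficient off blindly, I would recognise the bracket in (\ref{hatlambda}) as a single $\tau_\nu$: applying (\ref{taunu}) with $\nu=n-1$ and base point $s-n$, together with the lattice identities $\nabla x_1(s-1)=\nabla x_{-1}(s)$ and $\nabla x_n(s-n)=\nabla x_{-n}(s)$, shows that the bracket equals $\tau_{n-1}(s-n)$. By Proposition \ref{expresstau} this is $\kappa_{2n-1}x_{n-1}(s-n)+c(n-1)=\kappa_{2n-1}x_{-(n+1)}(s)+c(n-1)$, so $\widehat{\lambda}=-\Delta_{-(n+1)}\tau_{n-1}(s-n)=-\kappa_{2n-1}$.

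Combining the two pieces and telescoping,
\[
\widehat{\mu}_n=-\kappa_{2n-1}-\sum_{i=0}^{n-1}\kappa_{2i-1}=-\kappa_{-1}-\Big(\kappa_{2n-1}+\sum_{i=1}^{n-1}\kappa_{2i-1}\Big)=-\kappa_{-1}-\sum_{j=0}^{n-1}\kappa_{2j+1},
\]
the last equality re-indexing $i=j+1$ and reinstating $\kappa_{2n-1}$ as the $j=n-1$ term. Since $\sum_{j=0}^{n-1}\kappa_{2j+1}=\kappa_n\nu(n)$ (the computation already carried out in Proposition \ref{expressmu} via Lemma \ref{lemma0}), this yields the first asserted form $\widehat{\mu}_n=-\kappa_{-1}-\kappa_n\nu(n)$. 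For the second form it remains to prove the algebraic identity $\kappa_{-1}+\kappa_n\nu(n)=\kappa_{n-1}\nu(n+1)$; writing each $\kappa$ in terms of $\widetilde{\tau}'$ and $\tfrac12\widetilde{\sigma}''$ this splits into $\alpha(n-2)\nu(n+1)-\alpha(n-1)\nu(n)=\alpha(2)$ and $\nu(n-1)\nu(n)-\nu(n-2)\nu(n+1)=\nu(2)$, which are instances of the product formulas $\alpha(a)\nu(b)-\alpha(a+1)\nu(b-1)=\alpha(b-a-1)$ and $\nu(a)\nu(b)-\nu(a-1)\nu(b+1)=\nu(b-a+1)$ and are checked at once from the explicit forms of $\alpha(\mu),\nu(\mu)$ in both lattice cases. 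The only real obstacle is the correct determination of $\widehat{\lambda}$: once the shift in the index of $\tau_\nu$ is resolved correctly, the telescoping and the $\alpha,\nu$ identities are routine.
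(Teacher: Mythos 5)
Your proof is correct, and it reaches the result by a genuinely different decomposition than the paper's, though both rest on the same two pillars (the leading coefficients from Lemma \ref{lattice} and the Suslov summation of Lemma \ref{lemma0} as packaged in Proposition \ref{expressmu}). The paper's key move is to observe that $\widehat{\lambda}$ is itself the missing $k=-1$ term of the very sum being computed: comparing (\ref{hatlambda}) with (\ref{taukk}) gives $\widehat{\lambda}=\Delta_{-(n+1)}\widehat{\tau}_{-1}(s)$, so that $\widehat{\mu}_{n}=\sum_{k=-1}^{n-1}\Delta_{k-n}\widehat{\tau}_{k}(s)$; it then reads off each term's leading coefficient, reindexes $k\mapsto n-k-2$, splits off $-\kappa_{-1}$, and identifies the remainder with $-\kappa_{n}\nu(n)$. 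You instead evaluate $\widehat{\lambda}$ on its own, recognizing the bracket in (\ref{hatlambda}) as $\tau_{n-1}(s-n)$ via (\ref{taunu}) together with the shift identities $\nabla x_{1}(s-1)=\nabla x_{-1}(s)$ and $\nabla x_{n}(s-n)=\nabla x_{-n}(s)$, which gives $\widehat{\lambda}=-\kappa_{2n-1}$; this agrees with the paper's value, since Lemma \ref{lattice} at $k=-1$ gives $\Delta_{-(n+1)}\widehat{\tau}_{-1}(s)=-\kappa_{2(n-1)+1}=-\kappa_{2n-1}$, and your subsequent reshuffling (trading $\kappa_{2n-1}$ back into the sum to reconstitute $\sum_{j=0}^{n-1}\kappa_{2j+1}=\kappa_{n}\nu(n)$ while leaving $-\kappa_{-1}$ behind) is the paper's reindexing performed in the opposite order, so both routes are equally efficient. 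Where you genuinely add value is at the last step: the paper asserts $-\kappa_{-1}-\kappa_{n}\nu(n)=-\kappa_{n-1}\nu(n+1)$ in both lattice cases with no justification, whereas you reduce it, using $\alpha(-2)=\alpha(2)$ and $\nu(-2)=-\nu(2)$, to the two product identities $\alpha(n-2)\nu(n+1)-\alpha(n-1)\nu(n)=\alpha(2)$ and $\nu(n-1)\nu(n)-\nu(n-2)\nu(n+1)=\nu(2)$, which indeed hold for both the $q$-quadratic and quadratic lattices; this closes a gap the paper leaves open, since that identity is exactly what legitimizes the compact form $\widehat{\mu}_{n}=-\kappa_{n-1}\nu(n+1)$ used later in (\ref{lambdastar00}).
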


\begin{proof}
By (\ref{hatlambda}), we can see that $\hat\lambda\mathrm{\ = }\Delta_{ - (n +
1)} \tau_{ - (n + 1)} (s) = \Delta_{ - (n + 1)} \hat\tau_{ - 1} (s)$. Then, we
can rewrite (\ref{muk}) as
\begin{align}
\label{muhat}\widehat{\mu}_{n}={\hat\lambda}+\sum_{k=0}^{n-1}\Delta
_{k-n}\widehat{\tau}_{k}(s)=\sum_{k=-1}^{n-1}\Delta_{k-n}\widehat{\tau}%
_{k}(s).
\end{align}
Then from Lemma \ref{lattice}, if $x(s)=c_{1}q^{s}+c_{2}q^{-s}+c_{3}$,
\begin{align*}%
\begin{split}
\widehat{\mu}_{n}  &  =\sum_{k=-1}^{n-1}\left[  \frac{q^{k-n+2}-q^{n-k-2}%
}{q^{\frac{1}{2}}-q^{\frac{1}{2}}}\frac{\sigma^{\prime\prime}}{2}-\left(
q^{k-n+2}+q^{n-k-2}\right)  \frac{\tau^{\prime}}{2}\right] \\
&  =\sum_{k=-1}^{n-1}\left[  \frac{q^{-k}-q^{k}}{q^{\frac{1}{2}}-q^{\frac
{1}{2}}}\frac{\sigma^{\prime\prime}}{2}-\left(  q^{-k}+q^{k}\right)
\frac{\tau^{\prime}}{2}\right] \\
&  =- \kappa_{ - 1}+ \sum_{k=0}^{n-1}\left[  \frac{q^{-k}-q^{k}}{q^{\frac
{1}{2}}-q^{\frac{1}{2}}}\frac{\sigma^{\prime\prime}}{2}-\left(  q^{-k}%
+q^{k}\right)  \frac{\tau^{\prime}}{2}\right] \\
&  = - \kappa_{ - 1} - \kappa_{n} \nu(n)=- \kappa_{n-1} \nu(n+1).
\end{split}
\end{align*}
If $x(s)=\widetilde{c}_{1}s^{2}+\widetilde{c}_{2}s+\widetilde{c}_{3}$,
\begin{align*}%
\begin{split}
\widehat{\mu}_{n}  &  =\sum_{k=-1}^{n-1}\left[  (k-n+2)\sigma^{\prime\prime
}-\tau^{\prime}\right] \\
&  =\sum_{k=-1}^{n-1}\left[  -k\sigma^{\prime\prime}-\tau^{\prime}\right] \\
&  =- \kappa_{ - 1}+\sum_{k=0}^{n-1}\left[  -k\sigma^{\prime\prime}%
-\tau^{\prime}\right] \\
&  = - \kappa_{ - 1} - n\kappa_{n}=- (n+1)\kappa_{n-1}.
\end{split}
\end{align*}

\end{proof}

\begin{theorem}
If $\lambda= \lambda_{n} = - \kappa_{n} \nu(n)$, then the equation
(\ref{Ynsn}) is
\[
L^{*}[Y_{n}^{(n)}(s)]=0.
\]

\end{theorem}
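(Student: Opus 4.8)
The plan is to show that, under the hypothesis $\lambda=\lambda_{n}=-\kappa_{n}\nu(n)$, the difference equation (\ref{Ynsn}) coincides coefficient-by-coefficient with the adjoint equation $L^{*}[w]=0$ of (\ref{adjoint equation}) evaluated at $w=Y_{n}^{(n)}(s)$. Since $L^{*}$ carries the three coefficients $\sigma^{*}(s)$, $\tau^{*}(s)$, $\lambda^{*}$, while (\ref{Ynsn}) carries $\widehat{\sigma}(s)$, $\widehat{\tau}_{n}(s)$, $\widehat{\mu}_{n}$, it suffices to verify the three identities $\widehat{\sigma}(s)=\sigma^{*}(s)$, $\widehat{\tau}_{n}(s)=\tau^{*}(s)$, and $\widehat{\mu}_{n}=\lambda^{*}$; the first two are purely structural and hold for every $\lambda$, while only the third invokes the eigenvalue condition.

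First I would match the leading coefficients. This is already recorded in (\ref{hatsigma}), where $\widehat{\sigma}(s)=\sigma(s-1)+\tau(s-1)\nabla x_{-1}(s)=\sigma^{*}(s)$, so no further work is needed. Next, for the first-order coefficient I would specialize the recurrence (\ref{taukk}) to $k=n$, which gives $\widehat{\tau}_{n}(s)=-\tau_{-2}(s+1)$; comparing with Corollary \ref{corollary2}, equation (\ref{tau*}), this is exactly $\tau^{*}(s)$. Thus the two second-order operators $\widehat{\sigma}(s)\Delta_{-1}\nabla_{0}+\widehat{\tau}_{n}(s)\Delta_{0}$ and $\sigma^{*}(s)\Delta_{-1}\nabla_{0}+\tau^{*}(s)\Delta_{0}$ are literally identical.

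The only step that genuinely uses the hypothesis is the identification of the constant terms. By Corollary \ref{corollary6} we have $\widehat{\mu}_{n}=-\kappa_{-1}-\kappa_{n}\nu(n)$, while (\ref{lambda*}) of Corollary \ref{corollary2} gives $\lambda^{*}=\lambda-\kappa_{-1}$. Substituting $\lambda=\lambda_{n}=-\kappa_{n}\nu(n)$ into the latter yields $\lambda^{*}=-\kappa_{n}\nu(n)-\kappa_{-1}=\widehat{\mu}_{n}$. With all three coefficients matched, equation (\ref{Ynsn}) becomes $\sigma^{*}(s)\Delta_{-1}\nabla_{0}Y_{n}^{(n)}(s)+\tau^{*}(s)\Delta_{0}Y_{n}^{(n)}(s)+\lambda^{*}Y_{n}^{(n)}(s)=0$, which is precisely $L^{*}[Y_{n}^{(n)}(s)]=0$.

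There is no analytic obstacle here: the argument is an assembly of computations already carried out earlier in the excerpt, and the work is purely bookkeeping. The one point deserving care is the conceptual crux, namely confirming that the spectral value $\lambda_{n}=-\kappa_{n}\nu(n)$ is exactly the value that reconciles the two constant terms, i.e. that the eigenvalue shift $-\kappa_{-1}$ appearing in $\lambda^{*}$ through (\ref{lambda*}) is the same constant that surfaces in Corollary \ref{corollary6} for $\widehat{\mu}_{n}$. This coincidence is what makes the adjoint equation and the equation satisfied by $Y_{n}^{(n)}(s)$ agree, and it is the real content of the statement.
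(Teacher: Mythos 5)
Your proposal is correct and follows essentially the same route as the paper: the paper's proof likewise reduces the theorem to the single identity $\widehat{\mu}_{n}=\lambda^{*}$ (taking $\widehat{\sigma}(s)=\sigma^{*}(s)$ and $\widehat{\tau}_{n}(s)=\tau^{*}(s)$ as already established by (\ref{hatsigma}) and the $k=n$ case of (\ref{taukk})), computes $\lambda^{*}=\lambda_{n}-\Delta_{-1}\tau_{-1}(s)=-\kappa_{n}\nu(n)-\kappa_{-1}$, and concludes by Corollary \ref{corollary6}. The only cosmetic difference is that you invoke (\ref{lambda*}) of Corollary \ref{corollary2} for $\lambda^{*}=\lambda-\kappa_{-1}$ where the paper re-derives this from (\ref{lambdastar}) and Proposition \ref{expresstau}, and you spell out explicitly the coefficient matching that the paper leaves implicit.
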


\begin{proof}
We only need to prove that when $\lambda=\lambda_{n}$, we have $\widehat{\mu
}_{n}=\lambda^{*}$. By (\ref{lambdastar}) and Proposition \ref{expresstau}, we
have
\begin{align}
\label{lambdastar00}%
\begin{split}
\lambda^{*}  &  =\lambda_{n}-\Delta_{-1}\left(  \tau(s-1)\frac{\nabla
x_{-1}(s)}{\nabla x(s)}-\frac{\nabla\sigma(s)}{\nabla x(s)}\right) \\
&  =\lambda_{n}-\Delta_{-1}\tau_{-1}(s)\\
&  = - \kappa_{n} \nu(n) - \kappa_{ - 1}=- \kappa_{n-1} \nu(n+1) .
\end{split}
\end{align}

Therefore, By Corollary \ref{corollary6}, we obtain $\widehat{\mu}_{n}%
=\lambda^{*}$.
\end{proof}

By the identity (\ref{relation}), we have $L[\frac{1}{\rho}Y_{n}^{(n)}%
]=L^{*}[Y_{n}^{(n)}]=0$. Then we get an extension of Rodrigues type formula:

\begin{theorem}
\label{extension1} If
\[
\lambda=\lambda_{n} ~\text{and}~ \lambda_{m}\neq\lambda_{n}~\text{for}%
~m=0,1,\dots,n-1,
\]
then the general solution of (\ref{operator}) is
\begin{align*}%
\begin{split}
y_{n}(s)  &  =\frac{C_{1}}{\rho(x)}\Delta_{-n}^{(n)}[\rho(s)\prod_{j=0}%
^{n-1}\sigma(s-j)]\\
&  +\frac{C_{2}}{\rho(x)}\Delta_{-n}^{(n)}\left[  \rho(s)\prod_{j=0}%
^{n-1}\sigma(s-j)\int_{N}^{s}\frac{1}{\rho(t)\prod_{j=0}^{n}\sigma
(t-j)}d_{\nabla}x_{-n}(t)\right]  .
\end{split}
\end{align*}

\end{theorem}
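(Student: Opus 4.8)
The plan is to realize the two displayed summands as the images, under the $n$-fold operator $\Delta_{-n}^{(n)}$ followed by division by $\rho$, of the two linearly independent solutions of the auxiliary equation (\ref{Yns}), and then to transport them back to (\ref{operator}) through the intertwining identity (\ref{relation}). First I would recall that $Y_n(s)=\rho(s)\prod_{j=0}^{n-1}\sigma(s-j)$ satisfies the first order equation (\ref{Yn1}); applying Proposition \ref{the other} with $k=-n$, $p_1(s)=\sigma(s-n)$ and $p_0(s)$ the parenthesized factor on the right of (\ref{Yn1}), I obtain that $Y_n$ solves the second order equation (\ref{Yns}) and that a second, linearly independent solution is exactly the function $\widehat{Y}_n(s)$ displayed in (\ref{hatYn}), since $p_1(t)Y_n(t)=\sigma(t-n)\rho(t)\prod_{j=0}^{n-1}\sigma(t-j)=\rho(t)\prod_{j=0}^{n}\sigma(t-j)$.

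Next I would push both solutions up the ladder. The lowering/raising property recorded around (\ref{general equation}) and used to produce (\ref{Ynsn}) is an algebraic identity on the difference equation, valid for every solution and not only the polynomial ones; hence applying $\Delta_{-n}^{(n)}$ to any solution of (\ref{Yns}) yields a solution of (\ref{Ynsn}). In particular both $Y_n^{(n)}(s)=\Delta_{-n}^{(n)}Y_n(s)$ and $\Delta_{-n}^{(n)}\widehat{Y}_n(s)$ satisfy (\ref{Ynsn}). Under the hypothesis $\lambda=\lambda_n$, the theorem immediately preceding this one shows $\widehat{\mu}_n=\lambda^{*}$, so (\ref{Ynsn}) is precisely $L^{*}[\,\cdot\,]=0$. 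Feeding each of these into the relation $L^{*}[\rho y]=\rho L[y]$ of Proposition \ref{pro3}, with $y=\rho^{-1}\Delta_{-n}^{(n)}Y_n$ and $y=\rho^{-1}\Delta_{-n}^{(n)}\widehat{Y}_n$ respectively, I conclude that both $\rho^{-1}\Delta_{-n}^{(n)}Y_n$ and $\rho^{-1}\Delta_{-n}^{(n)}\widehat{Y}_n$ solve $L[y]=0$, that is (\ref{operator}). These are exactly the two terms appearing in the claimed formula.

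It remains to see that these two solutions are linearly independent, so that their span is the full two dimensional solution space of (\ref{operator}); this is the step I expect to be the main obstacle. Suppose $a\,\Delta_{-n}^{(n)}Y_n+b\,\Delta_{-n}^{(n)}\widehat{Y}_n=0$. By linearity $\Delta_{-n}^{(n)}(aY_n+b\widehat{Y}_n)=0$, so $aY_n+b\widehat{Y}_n$ lies in the kernel of the $n$-fold operator $\Delta_{-n}^{(n)}$, i.e. it is a polynomial of degree at most $n-1$ in $x_{-n}(s)$ while simultaneously being a solution of (\ref{Yns}). The non-resonance hypothesis $\lambda_m\neq\lambda_n$ for $m=0,1,\dots,n-1$ is what forbids (\ref{Yns}) from carrying a nonzero polynomial solution of degree below $n$ in that variable, forcing $a=b=0$. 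Equivalently, $\rho^{-1}\Delta_{-n}^{(n)}Y_n$ is the genuine degree $n$ polynomial solution $y_n$ given by the classical Rodrigues formula, whereas the non-telescoping sum $\int_{N}^{s}[\rho(t)\prod_{j=0}^{n}\sigma(t-j)]^{-1}d_{\nabla}x_{-n}(t)$ inside $\widehat{Y}_n$ prevents $\rho^{-1}\Delta_{-n}^{(n)}\widehat{Y}_n$ from reducing to a scalar multiple of $y_n$. With independence established, the arbitrary constants $C_1,C_2$ parametrize the entire solution space of (\ref{operator}), which completes the proof.
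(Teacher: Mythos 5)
Your proposal is correct and follows essentially the same route as the paper: the same auxiliary second-order equation (\ref{Yns}) obtained from (\ref{Yn1}) via Proposition \ref{the other}, the same ladder argument giving (\ref{Ynsn}) for $\Delta_{-n}^{(n)}$ of each solution, the same identification $\widehat{\mu}_n=\lambda^{*}$ under $\lambda=\lambda_n$ so that (\ref{Ynsn}) becomes $L^{*}[\cdot]=0$, and the same transport back to (\ref{operator}) through $L^{*}[\rho y]=\rho L[y]$ of Proposition \ref{pro3}. Your closing sketch of linear independence is a supplement the paper does not attempt (it asserts the general-solution form without proof), and while its details (the kernel of $\Delta_{-n}^{(n)}$ also contains $1$-periodic coefficients, and the non-resonance claim for (\ref{Yns}) is left unverified) are not fully rigorous, this does not affect agreement with the paper's argument.
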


%\subsection{Recurrence for the integrals of the second solution}
%Let
%\begin{align}
%I_{n+1}(s)=\int \frac{1}{\rho(s)\prod_{j=0}^n\sigma(s-j)}d_\nabla x_{-n}(s),n\geq 0,
%\end{align}
%where $\rho(s)$ is the solution of the Pearson type difference equation (\ref{pearson1}).

\section{More general Rodrigues formula}

\begin{proposition}
\cite[p.62]{nikiforov1991}\label{deduction} Let the lattice function $x(s)$
have the form
\[
x(s)=c_{1}q^{s}+c_{2}q^{-s}+c_{3}~\text{or}~x(s)=c_{1}s^{2}+c_{2}s+c_{3},
\]
where $q,c_{1},c_{2},c_{3}$ are constants. If $P_{n}[x_{k}(s)]$ is a
polynomial of degree $n$ in $x_{k}(s)$ with an arbitrary integer $k$, then
$\Delta_{k}P_{n}[x_{k}(s)]$ is a polynomial of degree $n-1$ in $x_{k+1}(s)$ .
\end{proposition}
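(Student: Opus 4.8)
The plan is to exploit linearity of $\Delta_k$ to reduce to monomials, and then to read off the behaviour of $\Delta_k$ on powers of $x_k(s)$ directly from the two defining lattice conditions (\ref{condition1}) and (\ref{condition2}). Writing $P_n[x_k(s)]=\sum_{m=0}^{n}a_m\,x_k(s)^m$ with $a_n\neq 0$, linearity gives $\Delta_k P_n[x_k(s)]=\sum_{m=0}^{n}a_m\,\Delta_k\!\left(x_k(s)^m\right)$, so it suffices to control each $\Delta_k\!\left(x_k(s)^m\right)$. Put $t=s+\tfrac{k+1}{2}$ and $u=x_{k+1}(s)=x(t)$, and abbreviate $a=x_k(s+1)=x(t+\tfrac12)$, $b=x_k(s)=x(t-\tfrac12)$. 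By the definition of $\Delta_k$,
\[
\Delta_k\!\left(x_k(s)^m\right)=\frac{a^m-b^m}{a-b}=\sum_{i=0}^{m-1}a^{i}b^{m-1-i}=:p_m,
\]
a symmetric polynomial in $a,b$ which I will express through $e_1=a+b$ and $e_2=ab$ by the recurrence $p_m=e_1p_{m-1}-e_2p_{m-2}$, $p_0=0$, $p_1=1$ (obtained from $a^m-b^m=(a+b)(a^{m-1}-b^{m-1})-ab(a^{m-2}-b^{m-2})$).

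The lattice conditions enter only to fix the $u$-degrees of $e_1$ and $e_2$. Applying (\ref{condition1}) at argument $t-\tfrac12$ shows that $e_1=a+b=2\alpha\,u+2\beta$ is a polynomial of degree one in $u$, and (\ref{condition2}) shows that $a^2+b^2$ has degree at most two in $u$; hence $e_2=ab=\tfrac12[(a+b)^2-(a^2+b^2)]$ has degree at most two in $u$. A short direct substitution in each of the two lattice families records the leading coefficients precisely: $e_1=\gamma u+(\text{const})$ with $\gamma=2\alpha$ equal to $q^{1/2}+q^{-1/2}$ for $x(s)=c_1q^s+c_2q^{-s}+c_3$ and to $2$ for $x(s)=c_1s^2+c_2s+c_3$, while in both cases $e_2=u^2+(\text{lower order})$ has leading coefficient $1$.

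Feeding these degrees into the recurrence and inducting on $m$, one sees that $p_m$ is a polynomial of degree exactly $m-1$ in $u$ whose leading coefficient $\ell_m$ satisfies $\ell_m=\gamma\,\ell_{m-1}-\ell_{m-2}$ with $\ell_1=1$, $\ell_2=\gamma$; this linear recurrence is solved by $\ell_m=\nu(m)$ (with $\nu$ as in Proposition \ref{expresstau}), which is nonzero for $1\leq m\leq n$ (automatic when $x(s)=c_1s^2+c_2s+c_3$, where $\nu(m)=m$, and equivalent to $q^{m}\neq1$ in the $q$-lattice case). Summing, the top contribution $a_n p_n$ has degree exactly $n-1$ with leading coefficient $a_n\,\nu(n)\neq 0$, whereas every $a_m p_m$ with $m<n$ has degree at most $m-1\leq n-2$; therefore $\Delta_k P_n[x_k(s)]$ is a polynomial of degree exactly $n-1$ in $x_{k+1}(s)$, as claimed. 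The one place that requires genuine care is the degree bookkeeping for $e_2$: the bound ``degree at most two'' is immediate from (\ref{condition2}), but confirming that $\Delta_k$ drops the degree by exactly one rather than by more forces the explicit evaluation of $a+b$ and $ab$ in the two families and the observation that the resulting recurrence for $\ell_m$ reproduces the never-vanishing quantity $\nu(m)$.
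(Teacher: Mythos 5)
Your proof is correct, but there is nothing in the paper to compare it against: the authors do not prove Proposition \ref{deduction} at all, they import it verbatim from \cite[p.~62]{nikiforov1991}. Your argument is a self-contained reconstruction of the classical Nikiforov--Suslov--Uvarov argument: reduce by linearity to monomials, write $\Delta_k\bigl(x_k(s)^m\bigr)=(a^m-b^m)/(a-b)$ with $a=x_k(s+1)$, $b=x_k(s)$, observe from (\ref{condition1})--(\ref{condition2}) (which hold identically in $s\in\mathbb{C}$, hence also at the shifted argument $t=s+\frac{k+1}{2}$) that $e_1=a+b$ is linear and $e_2=ab$ is quadratic in $u=x_{k+1}(s)$, and run the three-term recurrence $p_m=e_1p_{m-1}-e_2p_{m-2}$ inductively. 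Beyond the bare statement, your write-up adds two things of genuine value: first, the identification of the leading coefficient of $\Delta_k\bigl(x_k(s)^n\bigr)$ as $\nu(n)$, which is consistent with the $\nu$, $\kappa$ calculus of Proposition \ref{expresstau} and does require the monicity of $e_2$ in $u$ --- this genuinely needs the explicit computation you carried out in each lattice family, since (\ref{condition2}) by itself only bounds the degree of $e_2$ by two; second, the explicit caveat that ``degree exactly $n-1$'' holds only when $\nu(n)\neq 0$, i.e.\ $q^n\neq 1$ in the $q$-quadratic case, an assumption which the statement, the paper, and the cited source all leave implicit. The only cosmetic slip is that the intermediate terms $a_mp_m$ with $m<n$ need only the degree bound $\deg p_m\leq m-1$, not exactness, which your induction in fact delivers without the nonvanishing hypothesis.
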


We associate equation (\ref{adjoint equation}) with a difference equation of
the same type:
\begin{align}
\label{inhomogeneous equation}\sigma^{*}(s)\Delta_{-(n+1)}\nabla_{-n}
v(s)+\gamma(s,n)\Delta_{-n} v(s)+\eta(n) v(s)=P_{n-1}[x_{-n}(s)],
\end{align}
where $\gamma(s,n)$ and $\eta(n)$ are to be determined and $P_{n-1}%
[x_{-n}(s)]$ is an arbitrary polynomial of degree $n-1$ in $x_{-n}(s)$. We now
assume that
\begin{align*}%
\begin{split}
&  \Delta_{-n}^{(n)}\left[  \sigma^{*}(s)\Delta_{-(n+1)}\nabla_{-n}
v(s)+\gamma(s,n)\Delta_{-n} v(s)+\eta(n) v(s)\right] \\
&  =\sigma^{*}(s)\Delta_{-1}\nabla_{0} w(s)+\tau^{*}(s)\Delta_{0}
w(s)+\lambda^{*}w(s)=0
\end{split}
\end{align*}
and
\begin{align*}
w(s)=\Delta_{-n}^{(n)}v(s).
\end{align*}
Moreover, we assume that
\begin{align}
\label{exact}%
\begin{split}
&  \sigma^{*}(s)\Delta_{-n-1}\nabla_{-n} v(s)+\gamma(s,n)\Delta_{-n}
v(s)+\eta(n) v(s)\\
&  =\Delta_{-(n+1)}\left[  \sigma^{*}(s)\nabla_{-n} v(s)+\ell(s,n)v(s)\right]
\end{split}
\end{align}
with $\ell(s,n)$ to be determined.

Now, by the recurrence relations (\ref{recurrence1}) and (\ref{recurrence2}),
for any nonnegative integer $k$, we have
\begin{align*}
&  \gamma_{k}(s,n)=\frac{\sigma^{*}(s+k)-\sigma^{*}(s)+\gamma(s+k,n)\Delta
x_{-n}(s+k-\frac{1}{2})}{\Delta x_{-n+k-1}(s)},~\gamma_{0}(s,n)=\gamma(s,n),\\
&  \eta_{k}(n)=\eta(n)+\sum_{j=0}^{k-1}\Delta_{j-n}\gamma_{j}(s,n),~\eta
_{0}(n)=\eta(n).
\end{align*}
When $k=n$, we have
\begin{align}
\label{taustar1}%
\begin{split}
\tau^{*}(s)  &  =\gamma_{n}(s,n)=\frac{\sigma^{*}(s+n)-\sigma^{*}%
(s)+\gamma(s+n,n)\Delta x_{-n}(s+n-\frac{1}{2})}{\Delta x_{-n}(s+\frac{n-1}%
{2})}\\
&  =\frac{\sigma(s+1)-\sigma(s-1)}{\Delta x_{-1}(s)}-\tau(s-1)\frac{\nabla
x_{-1}(s)}{\Delta x_{-1}(s)},
\end{split}
\\%
\begin{split}
\label{lambdastar1}\lambda^{*}  &  =\eta(n)+\sum_{j=0}^{n-1}\Delta_{j-n}%
\gamma_{j}(s,n)\\
&  =\lambda-\nabla_{-1}\left(  \tau(s-1)\frac{\nabla x_{-1}(s)}{\nabla
x(s)}-\frac{\nabla\sigma(s)}{\nabla x(s)}\right)  .
\end{split}
\end{align}
From (\ref{taustar1}), we have
\begin{align}
\label{gamma}\gamma(s,n)=\frac{\sigma(s-n+1)-\sigma(s-1)-\tau(s-1)\nabla
x_{-1}(s)}{\Delta x_{-n}(s-\frac{1}{2})}.
\end{align}

Moreover, from (\ref{exact}) and Proposition \ref{product}, we obtain
\begin{align}
&  \ell(s+1,n)\frac{\Delta x_{-n}(s)}{\Delta x_{-(n+1)}(s)}+\Delta
_{-(n+1)}\sigma^{*}(s)=\gamma(s,n),\label{ellgamma}\\
&  \Delta_{-(n+1)}\ell(s,n)=\eta(n).
\end{align}
From (\ref{gamma}) and (\ref{ellgamma}), we have
\begin{align}
\label{ell}\ell(s,n)=\frac{\sigma(s-n)-\sigma(s-1)-\tau(s-1)\nabla x_{-1}%
(s)}{\nabla x_{-n}(s)}.
\end{align}
Then, $\eta(n)=\widehat{\lambda}$. Now, we compute $\lambda$. Note that, for
any nonnegative integer $k$,
\begin{align*}
\gamma_{k}(s,n)=\frac{\sigma(s+k-n+1)-\sigma(s-1)-\tau(s-1)\nabla x_{-1}%
(s)}{\Delta x_{k-n-1}(s)}=\widehat{\tau}_{k}(s).
\end{align*}
From (\ref{muhat}) and (\ref{lambdastar00}),
\begin{align*}
\lambda_{n}=\nabla_{-1}\tau_{-1}(s)+\widehat{\lambda}+\sum_{j=0}^{n-1}%
\Delta_{j-n}\widehat{\tau}_{k}(s).
\end{align*}
Then by (\ref{lambdastar1}), we have $\lambda=\lambda_{n}$.

By Proposition \ref{deduction}, we have
\begin{align}
\label{simple}\sigma^{*}(s)\nabla_{-n} v(s)+\ell(s,n)v(s)=P_{n}[x_{-(n+1)}%
(s)].
\end{align}
Now, we solve the equation (\ref{simple}). First, we consider the solution of
the following equation:
\begin{align}
\label{homegeneous0}\sigma^{*}(s)\nabla_{-n} v(s)+\ell(s)v(s)=0.
\end{align}
The equation of (\ref{homegeneous0}) can be rewritten as
\begin{align}
\label{homegeneous1}\sigma(s-n)v(s)=\left(  \sigma(s-1)+\tau(s-1)\nabla
x_{-1}(s)\right)  v(s-1).
\end{align}
Moreover, from Pearson type equation (\ref{pearson1}), we have
\begin{align*}
\rho(s)\sigma(s)=\left[  \sigma(s-1)+\tau(s-1)\nabla x_{-1}(s)\right]
\rho(s-1).
\end{align*}

Then the equation (\ref{homegeneous1}) becomes
\begin{align*}
\frac{v(s)}{v(s-1)}=\frac{\rho(s)\sigma(s)}{\rho(s-1)\sigma(s-n)}.
\end{align*}
It is easy to see that the solution of the above equation is
\begin{align*}
v(s)=C\rho(s)\prod_{i=0}^{n-1}\sigma(s-i),
\end{align*}
Where $C$ is any constant.

Now, let
\begin{align}
\label{solution}v(s)=C(s)\rho(s)\prod_{i=0}^{n-1}\sigma(s-i).
\end{align}
Then, substituting (\ref{solution}) into (\ref{simple}), we have
\begin{align*}
\nabla_{-n}C(s)=\frac{P_{n}[x_{-(n+1)}(s)]}{\rho(s)\prod_{i=0}^{n}\sigma
(s-i)}.
\end{align*}
Then, using Proposition \ref{integration},
\begin{align*}
C(s)=\int_{N}^{s}\frac{P_{n}[x_{-(n+1)}(t)]}{\rho(t)\prod_{i=0}^{n}%
\sigma(t-i)}d_{\nabla}x_{-n}(t)+\widetilde{C},
\end{align*}
where $\widetilde{C}$ is any constant. Since
\begin{align*}
\rho(s)y(s)=w(s)=\Delta_{-n}^{(n)}v(s),
\end{align*}
we obtain:

\begin{theorem}
\label{extension2} If
\[
\lambda=\lambda_{n} ~\text{and}~ \lambda_{m}\neq\lambda_{n}~\text{for}%
~m=0,1,\dots,n-1,
\]
then the general solution of (\ref{operator}) is
\begin{align*}%
\begin{split}
y_{n}(s)  &  =\frac{C}{\rho(x)}\Delta_{-n}^{(n)}[\rho(s)\prod_{j=0}%
^{n-1}\sigma(s-j)]\\
&  +\frac{1}{\rho(x)}\Delta_{-n}^{(n)}\left[  \rho(s)\prod_{j=0}^{n-1}%
\sigma(s-j)\int_{N}^{s}\frac{P_{n}[x_{-(n+1)}(t)]}{\rho(s)\prod_{j=0}%
^{n}\sigma(t-j)}d_{\nabla}x_{-n}(t)\right]  ,
\end{split}
\end{align*}
where $C$ is any constant and $P_{n}(\cdot)$ is an $n$-th polynomial.
\end{theorem}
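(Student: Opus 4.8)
The plan is to reduce the problem of solving (\ref{operator}) to solving its adjoint equation and then to build the general solution of the adjoint equation out of a single first-order difference equation. By Proposition \ref{pro3} we have $L^{*}[\rho y]=\rho L[y]$, so it suffices to produce the general solution $w(s)$ of the adjoint equation (\ref{adjoint equation}) and to set $y=w/\rho$. First I would seek $w$ in the factored form $w(s)=\Delta_{-n}^{(n)}v(s)$, so that $v$ plays the role of an $n$-fold primitive of $w$; the first term of the formula will come from the homogeneous part of the equation for $v$ (the standard Rodrigues solution), and the second, linearly independent term from an inhomogeneous part.

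To carry this out I would associate with (\ref{adjoint equation}) the inhomogeneous equation (\ref{inhomogeneous equation}) and choose its coefficients so that applying $\Delta_{-n}^{(n)}$ reproduces $L^{*}[w]=0$. Iterating the coefficients $\gamma_{k}(s,n)$ and $\eta_{k}(n)$ exactly as the recurrences (\ref{recurrence1}) and (\ref{recurrence2}) were used for $\tau_{k},\mu_{k}$, evaluation at $k=n$ forces $\gamma_{n}(s,n)=\tau^{*}(s)$ and $\eta_{n}(n)=\lambda^{*}$; together with the exactness ansatz (\ref{exact}) this pins down $\gamma(s,n)$ and $\ell(s,n)$ through (\ref{gamma}) and (\ref{ell}) and yields $\eta(n)=\widehat{\lambda}$, which is consistent precisely when $\lambda=\lambda_{n}$. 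The essential structural input is Proposition \ref{deduction}: each application of $\Delta_{-n}$ lowers the polynomial degree by one, so $\Delta_{-n}^{(n)}$ annihilates the degree-$(n-1)$ right-hand side $P_{n-1}[x_{-n}(s)]$, while the exact form (\ref{exact}) lets me integrate the equation once down to the first-order equation (\ref{simple}), whose right-hand side is then a polynomial $P_{n}[x_{-(n+1)}(s)]$ of degree $n$.

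The next step is to solve (\ref{simple}). I would first treat the homogeneous equation (\ref{homegeneous0}); rewriting it as (\ref{homegeneous1}) and invoking the Pearson equation (\ref{pearson1}) makes the ratio $v(s)/v(s-1)$ telescope, giving $v(s)=C\rho(s)\prod_{i=0}^{n-1}\sigma(s-i)$. Then variation of constants in the form (\ref{solution}) turns (\ref{simple}) into $\nabla_{-n}C(s)=P_{n}[x_{-(n+1)}(s)]/\bigl(\rho(s)\prod_{i=0}^{n}\sigma(s-i)\bigr)$, which integrates by Proposition \ref{integration} to $C(s)=\int_{N}^{s}(\cdots)\,d_{\nabla}x_{-n}(t)+\widetilde{C}$. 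Finally, since $\rho y=w=\Delta_{-n}^{(n)}v$, substituting this $v$ and separating the constant part $\widetilde{C}$ from the integral part yields the two claimed terms; renaming $\widetilde{C}$ as $C$ gives the stated $y_{n}(s)$.

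The step I expect to be the main obstacle is the reduction itself, namely verifying rigorously that $w=\Delta_{-n}^{(n)}v$ does solve the adjoint equation. This rests on the compatible matching of the iterated coefficients $\gamma_{k},\eta_{k}$ at $k=n$ with $\tau^{*},\lambda^{*}$ and on the exact annihilation of the lower-degree remainder by $\Delta_{-n}^{(n)}$ through Proposition \ref{deduction}; the delicate bookkeeping is to check that the coefficients forced by (\ref{gamma}), (\ref{ell}) and $\eta(n)=\widehat{\lambda}$ accumulate to exactly $\lambda=\lambda_{n}$, so that the whole construction is self-consistent rather than over-determined.
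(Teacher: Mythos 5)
Your proposal follows essentially the same route as the paper: the same inhomogeneous auxiliary equation (\ref{inhomogeneous equation}) with the exactness ansatz (\ref{exact}), the same determination of $\gamma(s,n)$, $\ell(s,n)$, $\eta(n)=\widehat{\lambda}$ via the recurrences and the matching $\gamma_{n}(s,n)=\tau^{*}(s)$, $\eta_{n}(n)=\lambda^{*}$ forcing $\lambda=\lambda_{n}$, the same degree-reduction argument through Proposition \ref{deduction} to arrive at the first-order equation (\ref{simple}), and the same resolution of (\ref{simple}) by solving the homogeneous equation via the Pearson relation and then varying the constant, finishing with $\rho y=w=\Delta_{-n}^{(n)}v$. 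The argument is correct as proposed, including your identification of the self-consistency check at $\lambda=\lambda_{n}$ as the delicate point.
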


\begin{ack}
The authors were supported by the Fundamental Research Funds for the central
Universities, grant number 20720150006 and Natural Science Foundation of
Fujian province, grant number 2016J01032.
\end{ack}

%\bibliography{luke2}

\end{document}